\newcommand{\ovl}{\overline}
\newcommand{\vp}{\varepsilon}
\newcommand{\cl}[1]{{\mathcal{#1}}}
\newcommand{\mxg}{M\rtimes_\alpha G}
\newcommand{\axg}{A\rtimes_\alpha G}
\newcommand{\fmxg}{M\rtimes_\alpha G_F}
\newcommand{\mxh}{M\rtimes_\alpha H}
\newcommand{\ggnmn}{\mathcal{GN}(M\subseteq N)}
\newcommand{\gnmn}{\mathcal{GN}_{\mathcal{Z}(M)}(M\subseteq N)}
\newcommand{\gncp}{\mathcal{GN}_{\mathcal{Z}(M)}(M\subseteq M\rtimes_\alpha G)}
\newcommand{\Ad}{\mathrm{Ad}\,}
\newcommand{\sett}[1]{\left\{#1\right\}}
\newcommand{\Hil}{\mathcal{H}}
\newcommand{\Z}{\mathcal{Z}}
\numberwithin{equation}{section}
\theoremstyle{plain}
\newtheorem{lem}{Lemma}[section]
\newtheorem{thm}[lem]{Theorem}
\newtheorem{cor}[lem]{Corollary}
\theoremstyle{definition}
\newtheorem{defn}[lem]{Definition}
\theoremstyle{remark}
\newtheorem{rem}[lem]{Remark}
\begin{document}

\null\vspace{.5in}
\setcounter{page}{1}
\begin{LARGE}
\begin{center}
INTERMEDIATE SUBALGEBRAS AND BIMODULES  FOR GENERAL CROSSED PRODUCTS OF VON NEUMANN ALGEBRAS
\end{center}
\end{LARGE}
\bigskip

\begin{center}
\begin{tabular}{cc}
{\large Jan Cameron}$^{(*)}$&{ \large Roger R. Smith}$^{(**)}$\\
&\\
Department of Mathematics & Department of Mathematics\\
Vassar College& Texas A\&M University\\
Poughkeepsie, NY 12604 & College Station, TX 77843\\
&\\
jacameron@vassar.edu & rsmith@math.tamu.edu

\end{tabular}
\end{center}

\vfill

\begin{abstract}
Let $G$ be a discrete group acting on a von Neumann algebra $M$ by properly outer $*$-automorphisms. In this paper we study
the containment  $M \subseteq M\rtimes_\alpha G$ of $M$ inside the crossed product. We characterize the intermediate von Neumann algebras, extending earlier work of other authors in the factor case. We also determine the $M$-bimodules that are closed in the Bures topology and which coincide with the $w^*$-closed ones under a mild hypothesis on $G$. We use these results to obtain a general version of Mercer's theorem concerning the extension of certain isometric $w^*$-continuous  maps on $M$-bimodules to $*$-automorphisms of the containing von Neumann algebras.

\end{abstract}

\vfill

\noindent Key Words:\qquad von Neumann algebra, crossed product, bimodule

\vfill

\noindent AMS Classification: 46L10, 46L06

\vfill

\noindent ($\ast$) JC was partially supported by Simons Collaboration Grant for Mathematicians \#319001

\noindent ($\ast \ast$) RS was partially supported by NSF grant DMS-1101403.
\newpage

\section{Introduction}\label{sec1}

In this paper we address several problems that arise in the theory of crossed products. Before discussing our results, we begin by presenting some background.
 
Let a discrete group $G$ act on a von Neumann algebra $M$ by properly outer $*$-automorphisms. When $M$ is a factor, the resulting crossed product $\mxg$ is also a factor, and each subgroup $H$ of $G$ gives rise to a natural intermediate  factor $M\subseteq \mxh\subseteq\mxg$. In \cite{Cho}, Choda considered the question of whether there is a Galois theory in this situation: do the crossed products by subgroups of $G$ account for all intermediate factors? He was able to give an affirmative answer under an additional hypothesis that each intermediate factor $N$ is the range of a faithful normal conditional expectation $E:\mxg\to N$, a condition that is automatically satisfied when $M$ is a type II$_1$ factor. For factors $M$ with separable preduals, the existence of faithful normal conditional expectations onto all intermediate factors was established in \cite{ILP}, providing a Galois theory under the separable predual assumption. This separability requirement was removed in \cite{CaSm}, so that Choda's correspondence between intermediate subfactors and subgroups holds for all factors.

At the other end of the spectrum lie the abelian von Neumann algebras, and the crossed products by discrete groups give particular examples of Cartan inclusions $A\subseteq\axg$. In \cite{MSS}, Muhly, Saito and Solel considered general Cartan inclusions $A\subseteq M$ where $M$ is a von Neumann algebra and $A$ is a Cartan masa in $M$. Their interest lay in the triangular subalgebras of $M$ for which a detailed analysis of the $w^*$-closed $A$-bimodules was an essential ingredient.
 Writing the  masa $A$ as
$L^\infty(X,\mu)$ for a measure space, the theory of Feldman and Moore \cite{FM1,FM2} describes $M$ in terms of   a measurable equivalence relation $R\subseteq X\times X$ and 2-cocycle on $R$ arising from the action of the normalizing unitaries $\mathcal{N}(A \subseteq M)$ on $A$. The Spectral Theorem for Bimodules \cite[Theorem 2.5]{MSS} asserted that the $w^*$-closed $A$-bimodules could be
characterized in terms of measurable subsets of the equivalence relation, and this was the main technical device for their subsequent discussion of various types of
$w^*$-closed subalgebras and isomorphisms between them. Unfortunately, as noted in \cite{Aoi}, there was a gap in the proof and so the question of whether this theorem is correct remains open,
although it has been proved in \cite{CPZ} for a (possibly) smaller class of $A$-bimodules that are closed in a different topology. This is a topology introduced by Bures \cite{Bur}, and which we term the $B$-topology. Its definition, which we give in Section \ref{bimodsect},
requires a containment $P\subseteq Q$ of von Neumann algebras with a faithful normal conditional expectation $E:Q\to P$, so applies to Cartan inclusions $A\subseteq M$ as well as those of the form $M\subseteq \mxg$. It plays a vital role in the theory of crossed products because the Fourier series $\sum_{g\in G}x_gg$ of an element $x\in \mxg$ need not converge in the standard von Neumann algebra topologies, but does converge to $x$ in the $B$-topology \cite{Mer0}. For the original formulation of the Spectral Theorem for Bimodules in terms of $w^*$-closed bimodules, only one case has been verified: a Cartan inclusion $A\subseteq M$ where $M$ is an amenable von Neumann algebra \cite{Ful}.

Our objective is to develop a general theory of bimodules and subalgebras of crossed products that includes the two types discussed above. If we choose a central projection $z_g\in \Z(M)$ then a simple example of an $M$-bimodule in $\mxg$ is $Mz_gg$. These form the building blocks of the theory since we show that every $B$-closed $M$-bimodule is the $B$-closed span of these special $M$-bimodules. When we require the group $G$ to have 
 the approximation property ({\emph{AP}}) of Haagerup and Kraus \cite{HK} (a large class of groups that includes the weakly amenable ones of Cowling and Haagerup \cite{CH}) we find that the $w^*$-closed and $B$-closed $M$-bimodules coincide. As a consequence, the Spectral Theorem for Bimodules is valid for any Cartan inclusion $A\subseteq \axg$ where $A$ is abelian and $G$ has the  {\emph{AP}}. In particular, this gives the first classes of
Cartan inclusions beyond the realm of amenable von Neumann algebras for which this theorem has been verified. 

There are some natural candidates for intermediate von Neumann algebras $M\subseteq N\subseteq \mxg$. For a set of central projections $z_g$ indexed by $G$, form the $w^*$-closed bimodule $N$ that is the $w^*$-closed span of $\{Mz_gg:g\in G\}$. Then there are some algebraic relations among the $z_g$'s to ensure that $N$ is a von Neumann algebra, and we prove that every intermediate von Neumann algebra has this form for a suitable choice of $z_g$'s. In \cite{Tak0} Takesaki related the existence of normal faithful conditional expectations to properties of the modular automorphism groups of certain faithful normal semifinite weights, and we use his theory to show that each intermediate $N$ is the range of such a conditional expectation. As a consequence, we show that any such $N$ is $B$-closed, and deduce from this the concrete form of the conditional expectation: on generators $mg$ for $\mxg$, it is $mg\mapsto mz_gg$.

A theorem of Mercer \cite{Mer} asserted, in the case of a Cartan masa inclusion $A\subseteq M$, that an isometric and surjective $A$-bimodule map on a $w^*$-closed
$A$-bimodule $X$ that generates $M$ has an extension to a $*$-automorphism of $M$, thus highlighting the essential rigidity of such maps. However, the argument was
flawed, and this result was established subsequently for $w^*$-continuous maps in \cite{CPZ}, a hypothesis that was implicit in the original formulation of \cite{Mer}. We apply  our results on the bimodules and intermediate von Neumann algebras to
 prove a general version of Mercer's extension theorem in Theorem \ref{liftX0} for crossed product inclusions $M\subseteq \mxg$, extending the Cartan result of \cite{CPZ} when the
Cartan masa arises from $A\subseteq A\rtimes_\alpha G$ for an abelian von Neumann algebra $A$. For examples of such extension theorems in the context of {\emph{CSL}}-algebras, see \cite{DavP}. We note that in the special case when $M$ is a factor we have already established these results in \cite{CaSm} using methods that were particular to factors.

The paper is organized as follows. Section \ref{bimodsect} has a brief discussion of the $B$-topology and we characterize the $B$-closed bimodules in terms of systems of
central projections indexed by the group $G$ (Theorem \ref{genmod}). Our classes of Cartan inclusions for which the Spectral Theorem for Bimodules is valid are presented in
Theorem \ref{STB}. In Section \ref{intermed} we characterize the intermediate von Neumann algebras for the inclusion $M\subseteq \mxg$ (Theorem \ref{Struct}). These all have the
form $N=\ovl{\mathrm{span}}^{w^*}\{Mz_gg:g\in G\}$ where the $z_g$'s are central projections satisfying some algebraic relations that ensure that $N$ is a unital
$*$-algebra.

The next two sections contain preliminary results that lead up to the proof of our main extension theorem for crossed products in Theorem \ref{liftX0}. Section \ref{groupoidsect} deals with certain central
groupoid normalizers, and our approach is based on results from Connes's noncommutative Rokhlin theorem \cite{Con}. These are used to establish Lemma \ref{ideal} which shows that a
nonzero ideal in a C$^*$-algebra containing $M$ will have  nonzero intersection with $M$ under appropriate hypotheses. This is the main result of the section, but we
also include automatic $B$-continuity of certain conditional expectations and $*$-automorphisms for later use. We also examine containments of the form $M\subseteq A$ where $A$ is a C$^*$-algebra generated by central groupoid normalizers of $M$. Here we show that if a conditional expectation onto the smaller algebra exists then it must be unique, in contrast to what may happen more generally \cite{AS}. Section \ref{normingsect} establishes that $M$ always norms $\mxg$
(as defined in \cite{PSS}), and this is used subsequently to prove that isometric $M$-bimodule maps are automatically completely isometric.

In Section \ref{Mercersect} we establish a version of Mercer's  extension theorem for general crossed products. Starting from a $w^*$-closed $M$-bimodule $X$, we drop to a submodule $X_0$
generated by the central groupoid normalizers in $X$ and we extend an $M$-bimodule map $\phi: X\to X$ to a $*$-automorphism of C$^*(X_0)$, which is the C$^*$-envelope of
$X_0$. Automorphisms of C$^*$-algebras do not necessarily extend to automorphisms of their containing von Neumann algebras so further argument, in which the $B$-topology
plays a crucial role, is needed to finally obtain an automorphism of $W^*(X)$ which extends the map $\phi$.

We close by noting that our approach to the problems discussed in this paper has been inspired by the ideas and techniques developed in \cite{CPZ,Mer0,MSS} and we gratefully
record our indebtedness to these authors. We also thank Stuart White for his comments on an earlier draft of the paper.

\section{Structure of bimodules}\label{bimodsect}

Let $A\subseteq N$ be an inclusion of von Neumann algebras with a  faithful normal conditional expectation $E:N\to A$, the example of interest for us being $M\subseteq
\mxg$ with conditional expectation $E_M$.  Here $G$ is a discrete group acting on $M$ by properly outer
$*$-automorphisms $\{\alpha_g:g\in G\}$, a slight abuse of terminology
since $\alpha_e=\,$id. We remind the reader that a $*$-automorphism $\alpha$ of a von Neumann algebra $M$ is inner if there exists a unitary $u\in M$ so that $\alpha=\Ad u$, while $\alpha$ is properly outer if there does not exist a nonzero $\alpha$-invariant central projection $z\in M$ so that $\alpha|_{Mz}$ is inner. Bures \cite{Bur} introduced a topology on $N$, which we term the $B$-topology, defined by the seminorms
\begin{equation}
\omega(E(x^*x))^{1/2},\ \ \ x\in N,
\end{equation}
where $\omega$ ranges over the normal states of $A$. Convergence of a net $\{x_\lambda\}_{\lambda\in \Lambda}$ from $N$ to $x\in N$ means that
\begin{equation}
\lim_\lambda \omega(E((x-x_\lambda)^*(x-x_\lambda)))=0,\ \ \ \omega\in A_*,
\end{equation}
from which it follows that the $B$-topology is weaker than the $\sigma$-strong topology. Throughout we use the term $w^*$-topology to denote the $\sigma$-weak topology on a von Neumann algebra. 
We refer to \cite[Section 3]{CaSm} for a discussion of the $B$-topology, but we include the following lemma which is implicit in the proof of \cite[Lemma 3.2]{CaSm}.
\begin{lem}\label{Bconv}
Let $A\subseteq N$ be an inclusion of von Neumann algebras with a faithful normal conditional expectation $E:N\to A$. A uniformly bounded  net from $N$
is $B$-convergent convergent if and only if it is $\sigma$-strongly convergent. In particular, such a $B$-convergent net is also convergent in the $w^*$-topology.
\end{lem}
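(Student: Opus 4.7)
The plan is to verify the two implications separately; the forward direction is immediate and the substantive direction rests on a $\sigma$-weak cluster-point argument applied to the net $\{y_\lambda^*y_\lambda\}$. For the forward direction, if $y_\lambda\to y$ in the $\sigma$-strong topology then, since $\omega\circ E$ is a normal state on $N$ for each normal state $\omega$ on $A$, the $B$-seminorm $\omega(E((y-y_\lambda)^*(y-y_\lambda)))^{1/2}$ is one of the defining $\sigma$-strong seminorms and therefore tends to $0$. This is essentially the observation already noted above the lemma, that the $B$-topology is coarser than the $\sigma$-strong topology.

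For the converse, let $\{y_\lambda\}\subseteq N$ be uniformly bounded, say $\|y_\lambda\|\le K$, and suppose it converges in the $B$-topology; subtracting the limit I may assume $y_\lambda\to 0$ $B$-ly. The definition of $B$-convergence, together with the fact that positive normal functionals span $A_*$, yields $E(y_\lambda^*y_\lambda)\to 0$ in the $\sigma$-weak topology of $A$. Setting $T_\lambda:=y_\lambda^*y_\lambda\ge 0$, the goal reduces to $T_\lambda\to 0$ $\sigma$-weakly in $N$, which is equivalent to $y_\lambda\to 0$ $\sigma$-strongly and a fortiori gives $w^*$-convergence. Now $\{T_\lambda\}$ lies in the $\sigma$-weakly compact ball of radius $K^2$, so has $\sigma$-weak cluster points; fix one, call it $T$, and pass to a subnet $T_\mu\to T$ $\sigma$-weakly. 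The normality (and hence $\sigma$-weak continuity on bounded sets) of $E$ gives $E(T)=\lim_\mu E(T_\mu)=0$, and since $T\ge 0$ I can write $T=(T^{1/2})^*T^{1/2}$; the faithfulness of $E$ then forces $T^{1/2}=0$, so $T=0$. Thus every $\sigma$-weak cluster point of $\{T_\lambda\}$ is zero, and $\sigma$-weak compactness forces $T_\lambda\to 0$ $\sigma$-weakly.

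The key step is this cluster-point argument, and its main obstacle is precisely the need to bridge from $\sigma$-weak convergence of $E(T_\lambda)$ in $A$ to $\sigma$-weak convergence of $T_\lambda$ in $N$. It depends essentially on both hypotheses on $E$, normality to commute the $\sigma$-weak limit with $E$ and faithfulness to deduce $T=0$ from $E(T)=0$ for positive $T$, and on the uniform bound, which supplies the $\sigma$-weak compactness required to extract cluster points; the equivalence of the two topologies should not be expected beyond bounded sets.
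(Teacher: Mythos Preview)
Your argument is correct. The paper does not give its own proof of this lemma but simply notes that it is implicit in the proof of \cite[Lemma~3.2]{CaSm}; your cluster-point argument---extracting a $\sigma$-weak limit point $T$ of $y_\lambda^*y_\lambda$ in the norm ball, using normality of $E$ to get $E(T)=0$, and then faithfulness to conclude $T=0$---is the natural route and is almost certainly what is intended there.
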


The crossed product $mxg$ is generated by a faithful normal
representation $\pi$ of $M$ and unitary operators $\{u_g:g\in G\}$ so that
$u_g\pi(m)u_g^*=\pi(\alpha_g(m))$ for $m\in M$ and $g\in G$ \cite{vD}. For ease of
notation we suppress $\pi$ and $u$, and write the generators of $\mxg$ as
$\{mg:m\in M,\ g\in G\}$ subject to the relations $gm=\alpha_g(m)g$.
Each element $x\in \mxg$ can be represented by a Fourier series $\sum_{g\in G}x_gg$,  and there is a unique faithful normal conditional expectation $E_M:\mxg
\to M$ given by $E_M(x)=x_e$. The remaining Fourier coefficients are then
defined by $x_g=E_M(xg^{-1})$ for $x\in \mxg$ and $g\in G$. As shown by Mercer \cite{Mer0}, this series converges to
$x$ in the $B$-topology, while it may fail to converge in the more standard von Neumann algebra topologies. Consequently the $B$-topology will play a central role in
this paper.

We now consider the structure of $M$-bimodules inside $\mxg$, generalizing the results of \cite{CaSm} in the factor case.

\begin{lem}\label{Mzg}
Let a discrete group $G$ act on a von Neumann algebra $M$ by properly outer $*$-automorphisms. Let $X\subseteq \mxg$ be a $w^*$-closed $M$-bimodule. Then there exist
central projections $\{z_g\in \mathcal{Z}(M):g\in G\}$ with the following properties:
\begin{itemize}
\item[(i)]
$Mz_gg\subseteq X$,
\item[(ii)] if $x\in X$ has Fourier series $\sum_{g\in G}x_gg$, then $x_g\in Mz_g$ for $g\in G$ and $x_gg\in X$.
\end{itemize}
\end{lem}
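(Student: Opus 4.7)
For each $g\in G$, I would set $I_g = \{m\in M : mg\in X\}$.  The identity $mg\cdot a = m\alpha_g(a)g$ shows that $I_g$ is a right ideal (using surjectivity of $\alpha_g$), while the left $M$-module property of $X$ makes it a left ideal.  Right multiplication by $g$ is a $w^*$-homeomorphism of $\mxg$, so the $w^*$-closedness of $X$ transfers to $I_g$.  Thus $I_g$ is a $w^*$-closed two-sided ideal of $M$, of the form $I_g = Mz_g$ for a unique central projection $z_g\in\Z(M)$; this proves~(i).

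For (ii), the construction of $I_g$ gives the equivalence $x_g\in Mz_g \iff x_gg\in X$, so I only need to establish $x_gg\in X$ for every $x\in X$ and every $g\in G$.  Observing that $Xg^{-1}$ is again a $w^*$-closed $M$-bimodule (right multiplication by $g^{-1}$ is a $w^*$-homeomorphism, and the bimodule property is preserved via $g^{-1}m = \alpha_{g^{-1}}(m)g^{-1}$) and that $(xg^{-1})_e = E_M(xg^{-1}) = x_g$, the problem reduces to the following single claim: for every $w^*$-closed $M$-bimodule $Y\subseteq\mxg$ and every $y\in Y$, one has $E_M(y)\in Y$.

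To establish this, I would average conjugates $uyu^*$ with $u\in\U(M)$, all of which lie in $Y$ since $Y$ is an $M$-bimodule.  A direct computation gives the Fourier coefficients $(uyu^*)_h = u y_h \alpha_h(u)^*$, so conjugation by $u$ twists the $h$th coefficient of $y$ by the unitary $u\alpha_h(u)^*$.  For each $h\ne e$, proper outerness of $\alpha_h$ combined with noncommutative Rokhlin/Kallman-type averaging from \cite{Con} places $0$ in the $w^*$-closed convex hull of $\{u y_h \alpha_h(u)^* : u\in\U(M)\}$.  Inside the $w^*$-compact convex set $B = \overline{\mathrm{co}}^{w^*}\{uyu^* : u\in\U(M)\}\subseteq Y$, a simultaneous averaging (an intersection/fixed-point argument, or a direct Rokhlin construction handling finitely many $h$ at once) produces elements of $B$ whose Fourier series is supported at $e$ alone, so those elements lie in $Y\cap M$.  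Applying the same reasoning to the perturbations $ayb$ for $a,b\in M$ and invoking faithfulness of the centre-valued Dixmier projection on the positive element $y_e^*y_e$ then forces $y_e\in Y\cap M$, as required.

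The chief obstacle is the simultaneous averaging in the last step: one must kill \emph{every} non-identity Fourier coefficient of $y$ while staying inside $Y$, not merely one coefficient at a time.  This uniformity is not automatic from single-coefficient averaging and relies on the full strength of Connes's noncommutative Rokhlin analysis for properly outer actions.  Once the averaging is in hand, $w^*$-closedness of $Y$ handles the passage to limits and returns $E_M(y)$ inside $Y$.
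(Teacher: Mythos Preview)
Your construction of $I_g$ and proof of (i) match the paper exactly, and your reduction of (ii) to ``$E_M(y)\in Y$ for every $w^*$-closed $M$-bimodule $Y$'' via $Y=Xg^{-1}$ is a clean reformulation. The genuine gap is precisely the one you flag yourself: simultaneous annihilation of \emph{all} non-identity Fourier coefficients. For a fixed $h\ne e$, Kallman's criterion does place $0$ in $\ovl{\mathrm{co}}^{w^*}\{uy_h\alpha_h(u)^*:u\in\U(M)\}$, but Connes's noncommutative Rokhlin theorem in \cite{Con} concerns a \emph{single} properly outer automorphism; it provides no mechanism for choosing averages that work uniformly across the infinite family $\{\alpha_h:h\ne e\}$, and a Rokhlin tower adapted to one $\alpha_h$ need have no relation to another. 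Your closing assertion that ``the full strength of Connes's noncommutative Rokhlin analysis'' delivers this uniformity is not supported by anything in \cite{Con}. The recovery step is also underspecified: since $Y$ need not contain $M$, it is not immediate how faithfulness of a Dixmier-type projection on $y_e^*y_e$ returns $y_e$ itself to $Y$ rather than merely a nonzero central element.

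The paper resolves both issues with different tools. In place of unitary conjugation it uses the Christensen--Sinclair net $\beta=(m_j)$ from \cite{ChSin} with $\sum_j m_j^*m_j=1$, together with the fact (\cite[Lemma~4.2]{CaSm}) that for \emph{any} properly outer $\alpha$ and \emph{any} $a\in M$ one has $w^*\text{-}\lim_\beta\sum_j m_j^*a\alpha(m_j)=0$. Thus a single net kills every non-identity coefficient at once, with no case-by-case Rokhlin construction. To make the target coefficient survive this averaging, the paper first reduces (via partial isometries $v_i$ with $\sum_i v_i^*pv_i$ equal to the central support $z$) to the case where the $g_0$-coefficient is a \emph{central} projection; then $y^\beta=\sum_j m_j^*y\alpha_{g_0^{-1}}(m_j)$ has $g_0$-coefficient $\sum_j m_j^*zm_j=z$ identically, while all other coefficients tend to $0$. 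This replaces your Dixmier recovery argument entirely. A spectral-projection argument then passes from central projections back to arbitrary $m\in J_{g_0}$.
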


\begin{proof}
For $g\in G$, let $I_g:=\{m\in M:mg\in X\}$. Then each $I_g$ is a $w^*$-closed ideal in $M$ so has the form $Mz_g$ for a central projection $z_g\in \mathcal{Z}(M)$. Then
$Mz_gg\subseteq X$, verifying (i).

Now let $J_g$ be the algebraic ideal in $M$ of elements that appear as the $g$-coefficient in the Fourier series of some element of $X$. Then $I_g\subseteq J_g$, so (ii)
will be verified by showing that $J_g\subseteq I_g$. Let $g_0\in G$ be fixed but arbitrary and consider a projection $p\in J_{g_0}$ with central support $z\geq p$. There
is at least one such projection since $0\in J_{g_0}$. By \cite[p. 61]{SS2} there exist partial isometries $v_i\in M$ such that $\sum_i v_i^*pv_i=z$. If $x=\sum_{g\in
G}x_gg\in X$ with $x_{g_0}=p$, then define $y=\sum_{g\in G}y_gg\in X$ by $y=\sum_iv_i^*x\alpha_{{g_0}^{-1}}(v_i)$, the sum converging in the $w^*$-topology. We note that
the $g_0$-coefficent is $y_{g_0}=\sum_i v_i^*pv_i=z$. Thus $z\in J_g$. By \cite[Theorem 3.3]{ChSin}, there is a net of sets of elements $\beta=(m_j)_{j\in J}$, $m_j\in M$, such that
\begin{itemize}
\item[(i)] $\sum_{j\in J} m_j^*m_j =1$;
\item[(ii)] for each completely bounded map $\phi :M\to M$ and each $x\in M$, $\lim_{\beta}\sum_{j\in J} \phi(xm_j^*)m_j$ exists in the $w^*$-topology and defines a
completely bounded linear right $M$-module map denoted $\rho(\phi)$;
\item[(iii)] $\phi \mapsto \rho(\phi)$ defines a completely contractive map $\rho :L_{cb}(M,M)\to L_{cb}(M,M)_M$ where these are respectively the spaces of completely
bounded maps on $M$ and of completely bounded right $M$-module maps on $M$.
\end{itemize}

It was shown in \cite[Lemma 4.2]{CaSm} that if $\alpha $ is a properly outer $*$-automorphism of $M$, $a\in M$, and $\phi(x):=\alpha(x)a$ for $x\in M$, then $\rho(\phi)=0$.

Now form the element
\begin{equation}
y^\beta =\sum_{j\in J} m_j^*y\alpha_{{g_0}^{-1}}(m_j)\in X,
\end{equation}
and note that $\|y^\beta\|\leq \|y\|$.
For each $\beta$, the $g_0$-coefficient of $y^\beta$ is
\begin{equation}
\sum_{j\in J} m_j^*zm_j=\sum_{j\in J} zm_j^*m_j=z,
\end{equation}
using the centrality of $z$. Drop to a $w^*$-convergent subnet of $\{y^\beta\}$ and follow the proof of \cite[Theorem 4.3]{CaSm} to conclude that the $w^*$-limit of this
subnet is $zg_0$ since the other terms of the Fourier series tend to 0 using the previous remarks on the map $\rho$. Thus $zg_0\in X$ so $z\in I_{g_0}$. Thus $p\in
I_{g_0}$ since $p=pz$.

Consider now a general element $m\in J_{g_0}$. Then $m^*m\in J_{g_0}$, and let $p_n$ be the spectral projection of $m^*m$ corresponding to the interval
$[n^{-1},\infty)$, $n\geq 1$. Choose $N$ so that $p_n\ne 0$ for $n\geq N$. Since $p_nm^*mp_n$ is invertible in $p_nMp_n$ for $n\geq N$, we may multiply by its inverse to obtain that $p_n$ is a projection in $J_{g_0}$  so lies in
$I_{g_0}$ from above. Thus $mp_n\in I_{g_0}$, and we conclude that $m\in I_{g_0}$ since $\lim_{n\to \infty}\| m-mp_n\|=0$. This establishes that $J_{g_0}\subseteq I_{g_0}$,
completing the proof.
\end{proof}

\begin{rem}\label{outer}
In the proof of Theorem \ref{liftX0} we will need the following fact. Let $\beta=(m_j)$ be the operators in the proof of Lemma \ref{Mzg} and let $\psi$ be a properly outer $*$-automorphism of a von Neumann algebra $M$. Let $a\in M$ be fixed. Then
\begin{equation}\label{outer.1}
w^*\text{-}\lim_\beta \sum_j m_j^*a\psi(m_j)=0.
\end{equation}
This is an immediate consequence of \cite[Lemma 4.2]{CaSm} applied to the completely bounded map $\phi:x\mapsto \psi(x)a$ since the left hand side of \eqref{outer.1} is the limit defining $(\rho\phi)(1)$.\hfill$\square$
\end{rem}

We can now give a description of the $B$-closed $M$-bimodules in $\mxg$ as well as the $w^*$-closed $M$-bimodules when $G$ has the Approximation Property ({\em{AP}}) of
Haagerup and Kraus \cite{HK}. This result parallels and extends \cite[Theorem 4.4]{CaSm}, using Lemma \ref{Mzg} in place of \cite[Theorem 4.3]{CaSm}. The argument is
essentially the same and so we omit the details.

\begin{thm}\label{genmod}
Let a discrete group $G$ act on a von Neumann algebra $M$ by properly outer $*$-automorphisms.
\begin{itemize}
\item[(i)]
There is a bijective correspondence between sets $S=\{z_g:g\in G\}$ of central projections in $M$ and $B$-closed $M$-bimodules in $\mxg$ given by \begin{equation}
S\mapsto X_S:=\ovl{\mathrm{span}}^B\{xz_gg:x\in M,\ g\in G,\ z_g\in S\}.
\end{equation}
\item[(ii)]
If $G$ has the ${\mathit{AP}}$, then the collections of $B$-closed and $w^*$-closed $M$-bimodules coincide.
\end{itemize}
\end{thm}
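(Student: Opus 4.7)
The argument follows the pattern of \cite[Theorem 4.4]{CaSm}, with Lemma \ref{Mzg} above replacing its factor-case analogue. For part (i), one first checks that $X_S$ is a $B$-closed $M$-bimodule: $B$-closedness is by definition, and bimodularity follows because left and right multiplication by fixed $m \in M$ are $B$-continuous on $\mxg$, verified directly from the seminorm definition together with Cauchy--Schwarz for $E_M$. Injectivity of $S \mapsto X_S$ follows from the identification $X_S = Y_S$, where $Y_S := \{y \in \mxg : y_g \in Mz_g \text{ for every } g \in G\}$: the inclusion $X_S \subseteq Y_S$ uses $B$-continuity of the Fourier projections $y \mapsto y_g = E_M(yg^{-1})$ together with the $\sigma$-strong closedness of $Mz_g$, while $Y_S \subseteq X_S$ uses Mercer's $B$-convergence of the Fourier series $\sum y_g g$ to $y$ (with partial sums in $\mathrm{span}\{Mz_g g\}$). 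From $X_S$ one then recovers each $z_g$ uniquely as the central support of the two-sided ideal $I_g := \{m \in M : mg \in X_S\} = Mz_g$.

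For surjectivity in (i), given a $B$-closed $M$-bimodule $X$, define $I_g := \{m \in M : mg \in X\}$. The $B$-closedness of $X$ combined with the $B$-continuity of $m \mapsto mg$ (from the $\sigma$-strong topology on $M$ into the $B$-topology on $\mxg$) makes $I_g$ a $\sigma$-strong closed two-sided ideal, so $I_g = Mz_g$. Setting $S = \{z_g\}$ gives $X_S \subseteq X$ immediately. For the reverse, one needs the $B$-closed analogue of Lemma \ref{Mzg}(ii): for $x \in X$, $x_g \in Mz_g$. I would pass to the $w^*$-closure $\tilde X$ of $X$, apply Lemma \ref{Mzg} to obtain projections $\tilde z_g$ with $x_g \in M\tilde z_g$, and then show $\tilde z_g = z_g$ by revisiting the averaging step in Lemma \ref{Mzg}: the sums $\sum_j m_j^* x \alpha_{g^{-1}}(m_j)$ are uniformly norm-bounded by $\|x\|$ (via Cauchy--Schwarz using $\sum_j m_j^* m_j = 1$), so their $w^*$-limits coincide with their $B$-limits by Lemma \ref{Bconv} and therefore lie in the $B$-closed bimodule $X$, not merely in $\tilde X$. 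This yields $\tilde z_g g \in X$, hence $\tilde z_g \in I_g = Mz_g$ and $\tilde z_g = z_g$. Once $x_g \in Mz_g$ is established, the $B$-convergence of $\sum x_g g$ to $x$ and the containment of partial sums in $\mathrm{span}\{Mz_g g\}$ yield $x \in X_S$. This bounded-averaging refinement is \emph{the main technical obstacle}.

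For part (ii), one inclusion holds without the AP: $X_S = Y_S$ is $w^*$-closed as an intersection of preimages $\{y : y_g \in Mz_g\}$ under the $w^*$-continuous Fourier projections. The reverse --- that every $w^*$-closed $M$-bimodule is $B$-closed --- uses the AP. For $X$ $w^*$-closed, Lemma \ref{Mzg} gives central projections $z_g$ with $X \subseteq X_S$, and the AP supplies a net $\Phi_\lambda : \mxg \to \mxg$ of finitely supported Herz--Schur multipliers, $\Phi_\lambda(y) = \sum_{g \in F_\lambda} \phi_\lambda(g) y_g g$, converging point-$w^*$ to the identity. For $y \in X_S = Y_S$, each $\Phi_\lambda(y)$ is a finite combination of elements $\phi_\lambda(g) y_g g \in Mz_g g \subseteq X$, so $\Phi_\lambda(y) \in X$; passing to the $w^*$-limit and using $w^*$-closedness of $X$ gives $y \in X$. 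Hence $X = X_S$, and $X$ is $B$-closed.
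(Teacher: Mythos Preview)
Your outline follows the same route the paper intends (it explicitly defers to \cite[Theorem~4.4]{CaSm} with Lemma~\ref{Mzg} replacing the factor-case result), and your treatment of injectivity in (i) and of both directions in (ii) is correct. There is, however, one genuine misstep in your surjectivity argument for (i), precisely at the point you flag as ``the main technical obstacle''.

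You claim that the bounded averaging sums $\sum_j m_j^* x\,\alpha_{g^{-1}}(m_j)$ have ``$w^*$-limits coincid[ing] with their $B$-limits by Lemma~\ref{Bconv}''. That lemma says only that a uniformly bounded \emph{$B$-convergent} net is $\sigma$-strongly (hence $w^*$-) convergent; it does \emph{not} assert that a bounded $w^*$-convergent net is $B$-convergent. So you cannot invoke it to place the $w^*$-limit of a subnet of $\{y^\beta\}$ inside the $B$-closed bimodule $X$. The same issue arises one step earlier, when forming $y=\sum_i v_i^* x\,\alpha_{g^{-1}}(v_i)$: this is obtained as a $w^*$-limit of bounded partial sums, and you need $y\in X$ before you can average it.

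The repair is simple and in fact makes the detour through $\tilde X$ unnecessary. Since the $B$-topology is weaker than the $\sigma$-strong topology, every $B$-closed set is $\sigma$-strongly closed; for a convex set, $\sigma$-strongly closed is the same as $w^*$-closed. Hence any $B$-closed $M$-bimodule $X$ is already $w^*$-closed. (You effectively prove this for the particular bimodules $X_S$ via the identification $X_S=Y_S$; the point is that it holds for arbitrary $B$-closed subspaces.) Now apply Lemma~\ref{Mzg} directly to $X$ to obtain the projections $z_g$ with $x_g\in Mz_g$ for every $x\in X$, and Mercer's $B$-convergence of the Fourier series then gives $X\subseteq X_S$ immediately. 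With this correction your argument matches the paper's.
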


We note here some connections between the results in this section, and those of \cite{CPZ} in the analogous setting of von Neumann algebras with a Cartan masa.   The two settings overlap in  crossed products of the form $L^\infty(X,\mu) \rtimes_\alpha  G,$ where $\alpha$ is induced by a free, ergodic action of $G$ on $X.$  In this special case, some of the results in \cite{CPZ} -- principally, Lemma 2.3.1 and Theorem 2.5.1 --  could in fact be used to give an alternate proof of part (i) of Theorem \ref{genmod}.  However, the techniques in \cite{CPZ} are largely specific to masas, so do not apply to more general crossed products.  On the other hand, the corollary of part (i) of Theorem \ref{genmod} stated below is an analogue for crossed products of the equivalence of conditions (c) and (d) in Theorem 2.5.1 of \cite{CPZ}. Before giving the full statement, we require some terminology.

\begin{defn}\label{groupoiddef}
Let $M\subseteq N$ be an inclusion of von Neumann algebras. A partial
isometry $v\in N$ is said to be a groupoid normalizer of $M$ if
$vMv^*,\,v^*Mv\subseteq M$, and the set of such elements is denoted by $\ggnmn$.
If, in addition, the projections $vv^*$ and $v^*v$ are central in $M$, then
we refer to $v$ as a central groupoid normalizer. We denote the set of
central groupoid normalizers by $\gnmn$, and we note that this set is closed under the operations of taking adjoints and products.\hfill$\square$
\end{defn}
 
 \begin{cor} \label{groupoid}  Let a discrete group $G$ act on a von Neumann algebra $M$ by properly outer $*$-automorphisms. Then any $B$-closed $M$-bimodule $X \subseteq \mxg$ has the form
\begin{equation}
X = \ovl{\mathrm{span}}^B\{X \cap \mathcal{GN}_{\mathcal{Z}(M)}(M \subseteq \mxg)\}.
\end{equation}
\end{cor}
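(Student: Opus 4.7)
The forward inclusion $\ovl{\mathrm{span}}^B\{X\cap\gncp\}\subseteq X$ is automatic since $X$ is $B$-closed and contains $X\cap\gncp$. The substance is in the reverse inclusion, and the plan is to reduce it to Theorem \ref{genmod}(i) by showing that each generator $mz_g g$ of $X$ lies in the algebraic span of $X\cap\gncp$.

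First I would attach to $X$ the family $S=\{z_g\in\Z(M):g\in G\}$ of central projections supplied by Lemma \ref{Mzg}, so that, by Theorem \ref{genmod}(i),
\[
X=\ovl{\mathrm{span}}^B\{mz_gg:m\in M,\ g\in G\}.
\]
Fix $g\in G$ and consider the reduced von Neumann algebra $Mz_g$ (with unit $z_g$). Its unitary group consists of the $u\in Mz_g$ with $uu^*=u^*u=z_g$; by a standard Russo--Dye/Kadison--Pedersen argument, every element of $Mz_g$ is an (algebraic) finite linear combination of such unitaries. Thus for any $m\in M$,
\[
mz_g=\sum_{i=1}^{k}c_iu_i,\qquad u_i\text{ unitary in }Mz_g.
\]

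The key point is that each $u_ig$ lies in $X\cap\gncp$. Indeed, since $u_i=u_iz_g\in Mz_g$ and $Mz_g g\subseteq X$ by Lemma \ref{Mzg}(i), we have $u_ig\in X$. For the groupoid property, a direct computation gives $(u_ig)(u_ig)^*=u_iu_i^*=z_g$ and $(u_ig)^*(u_ig)=g^{-1}z_gg=\alpha_{g^{-1}}(z_g)$, both central in $M$ (using that $\Z(M)$ is $\alpha$-invariant), so $u_ig$ is a partial isometry with central source and range. Moreover $(u_ig)M(u_ig)^*=u_i\alpha_g(M)u_i^*\subseteq M$ and $(u_ig)^*M(u_ig)=\alpha_{g^{-1}}(u_i^*Mu_i)\subseteq M$, so $u_ig\in\gncp$. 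Consequently $mz_gg=\sum_ic_i(u_ig)$ lies in the algebraic span of $X\cap\gncp$, and passing to $B$-closed spans gives $X\subseteq\ovl{\mathrm{span}}^B\{X\cap\gncp\}$.

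The main (and only) step that requires care is the identification of $u_ig$ as a central groupoid normalizer; the rest is bookkeeping on top of Theorem \ref{genmod}(i) and Lemma \ref{Mzg}. I do not expect any genuine obstacle here: the proof is essentially the observation that unitaries of the corner $Mz_g$ -- not unitaries of $M$ -- are exactly what one needs, and that these automatically generate the right part of the bimodule once the central-support bookkeeping from Lemma \ref{Mzg} is in place.
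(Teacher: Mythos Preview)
Your argument is correct and is exactly the kind of verification the paper has in mind: the corollary is stated without proof as an immediate consequence of Theorem~\ref{genmod}(i), and your write-up supplies the missing bookkeeping that each generator $mz_gg$ of $X_S$ lies in the linear span of $X\cap\gncp$. The only minor variant worth noting is that one can equally well observe that $z_gg$ itself is a central groupoid normalizer in $X$ and that $X\cap\gncp$ is stable under left multiplication by unitaries of $M$, which gives $uz_gg\in X\cap\gncp$ for every unitary $u\in M$; this avoids passing to the corner $Mz_g$, but is the same idea.
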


Combining this result with part (ii) of Theorem \ref{genmod}, we obtain the following characterization of $w^*$-closed bimodules in crossed products by groups with the $\mathit{AP}$.  

\begin{cor} \label{STBcor} Let a discrete group $G$ act on a von Neumann algebra $M$ by properly outer $*$-automorphisms. If $G$ has the $\mathit{AP},$ then any $w^*$-closed $M$-bimodule $X \subseteq \mxg$ has the form 
\begin{equation}
X = \ovl{\mathrm{span}}^{w^*}\{X \cap \mathcal{GN}_{\mathcal{Z}(M)}(M \subseteq \mxg)\}.
\end{equation}
\end{cor}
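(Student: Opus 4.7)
The plan is to combine Corollary~\ref{groupoid}, Theorem~\ref{genmod}(ii), and the Russo--Dye theorem.

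Since $G$ has the $\mathit{AP}$, Theorem~\ref{genmod}(ii) implies the $w^*$-closed bimodule $X$ is also $B$-closed, and Corollary~\ref{groupoid} then gives $X = \overline{\mathrm{span}}^B\{X \cap \gncp\}$. Write $W := \overline{\mathrm{span}}^{w^*}\{X \cap \gncp\}$; since $X\cap \gncp \subseteq X$ and $X$ is $w^*$-closed, $W \subseteq X$, so it remains to prove $X \subseteq W$.

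Let $\{z_g : g \in G\}$ be the central projections in $\mathcal{Z}(M)$ associated to $X$ by Lemma~\ref{Mzg}. For each $g$ with $z_g \neq 0$, the corner $Mz_g$ is a unital von Neumann algebra with identity $z_g$, so the Russo--Dye theorem realizes $Mz_g$ as the norm-closed linear span of its unitary group. For any unitary $u$ in $Mz_g$, the element $ug \in Mz_gg \subseteq X$ (by Lemma~\ref{Mzg}(i)) satisfies $(ug)(ug)^* = z_g$ and $(ug)^*(ug) = \alpha_{g^{-1}}(z_g)$, both central in $M$; hence $ug \in X \cap \gncp \subseteq W$. Since the unitaries of $Mz_g$ $w^*$-span $Mz_g$ and right multiplication by $g$ is $w^*$-continuous, we conclude $Mz_g g \subseteq W$.

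To finish, observe that $\overline{\mathrm{span}}^{w^*}\{xz_g g : x \in M,\ g \in G\}$ is a $w^*$-closed $M$-bimodule (left multiplication sends $xz_g g$ to $(mx)z_g g$, while right multiplication sends it to $xz_g\alpha_g(m)g$ by centrality of $z_g$), so Theorem~\ref{genmod}(ii) makes it $B$-closed; since it contains the generators $\{xz_g g\}$ of $X = \overline{\mathrm{span}}^B\{xz_g g\}$ from Theorem~\ref{genmod}(i) while being contained in $X$, it must equal $X$. Combined with $\{xz_g g\} \subseteq \bigcup_g Mz_g g \subseteq W$, this yields $X \subseteq W$. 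The main obstacle is precisely the passage from $B$-closure to $w^*$-closure: these topologies are not generally comparable on unbounded subsets, and Russo--Dye inside each corner $Mz_g$ is the tool that supplies enough central groupoid normalizers $ug$ to span $X$ in the $w^*$-topology directly.
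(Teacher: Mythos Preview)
Your proof is correct and follows the route the paper indicates (combine Corollary~\ref{groupoid} with Theorem~\ref{genmod}(ii)); the paper states this combination without further detail, and your argument supplies precisely the missing step, namely that the $w^*$-closed linear span of $X\cap\gncp$ already contains each $Mz_gg$. One small simplification: Russo--Dye is more than you need, since every element of the von Neumann algebra $Mz_g$ is an algebraic linear combination of at most four of its unitaries, so $Mz_gg$ lies in the \emph{algebraic} span of $X\cap\gncp$ and no closure argument is required at that point.
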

We conclude this section with some observations  on the Spectral Theorem for Bimodules of Muhly, Saito, and Solel \cite{MSS}.
If $M$ is a von Neumann algebra with separable predual, and a Cartan masa $A \subseteq M,$ then by the Feldman-Moore theory \cite{FM2} there exist a standard Borel space $(X,\mu)$, a Borel equivalence relation $R \subseteq X \times X$ with the property that each equivalence class is countable,  and a $2$-cocycle $\sigma$ on $R$ such that $M$ is isomorphic to the von Neumann algebra $M(R,\sigma)$ generated by $R$ and $\sigma$ (see \cite{FM2} for details on this construction).  The von Neumann algebra $M(R,\sigma)$ is a subalgebra of $B(L^2(R,\nu))$, where $\nu$  is the right counting measure on $R$ associated to $\mu$.   Denote by $\xi$ the characteristic function of the diagonal subset $\Delta_R =\sett{(x,x):x \in X}$ of $R$. Then, the map $ a \mapsto a \xi $ defines an embedding of $M(R,\sigma)$ into $L^2(R,\sigma)$, and the isomorphism of $M$ with $M(R,\sigma)$ identifies $A$ with the subalgebra $A(R,\sigma)$ of operators $a \in M(R,\sigma)$ such that $a\xi$ is supported on $\Delta_R.$  More generally, given a  measurable subset $S \subseteq R,$  it is straightforward to check that the set
\[  Y_S = \sett{ a \in M(R,\sigma): a\xi \text{ is supported on } S} \subseteq M(R, \sigma)\]
is a $w^*$-closed $A$-bimodule in $M(R,\sigma)$.  In \cite{MSS}, which studied structural properties of $w^*$-closed subalgebras of von Neumann algebras $M(R,\sigma)$ with Cartan masas, it was asserted, with the above notations, that every $w^*$-closed $A$-bimodule in $M(R,\sigma)$ has the form $Y_S$ for some $S \subseteq R.$  This statement was termed the Spectral Theorem for Bimodules; however, the proof given in \cite{MSS} was flawed (as pointed out by Aoi in \cite{Aoi}), as was a subsequent attempt \cite{MerC} to repair the proof.  Fulman \cite{Ful} gave a proof of the result for the special case in which the containing von Neumann algebra $M(R,\sigma)$ is hyperfinite.  Whether the Spectral Theorem for Bimodules holds in full generality remains an open question.  

It was shown in \cite{CPZ} that, for a von Neumann algebra $M$ with separable predual and a Cartan masa $A$, equality of the collections of $w^*$-closed and $B$-closed $A$-bimodules in $M$ is equivalent to the original statement of the Spectral Theorem for Bimodules in \cite{MSS}.   Thus, in Theorem \ref{genmod} part (ii) above, taking $M$ to be $L^\infty(X,\mu)$, and $\alpha$ to be a free action of a non-amenable group $G$ with the ${\mathit{AP}}$ on $X$ yields the first non-hyperfinite class of von Neumann algebras for which the Spectral Theorem for Bimodules has been verified.   To state this result in the original notation of \cite{MSS}, note that in this case $L^\infty(X,\mu)\rtimes_\alpha G$ is isomorphic to the von Neumann algebra $M(R,1),$ where $R$ is the orbit equivalence relation for the action $\alpha$, and $1$ is the trivial cocycle on $R$.  Denote by $\nu$ the right counting measure on $R$ induced by $\mu.$

 \begin{thm}  \label{STB} Let $G$ be a non-amenable group with the ${\mathit{AP}}$, and $\alpha$ a free action of $G$ on a standard probability space $(X,\mu)$.  Denote by $R$ the orbit equivalence relation for $\alpha,$ write $M = L^\infty(X,\mu) \rtimes_\alpha G \cong M(R,1)$, and let $A$ denote the copy of $L^\infty(X,\mu)$ in $M$.  If $Y \subseteq M$ is a $w^*$-closed $A$-bimodule, then there is a  $\nu$-measurable subset $S$ of $R$ such that 
 \[ Y = Y_S = \sett{a \in M: a \xi \text{ is supported on } S}.\]

 \end{thm}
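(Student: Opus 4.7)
The plan is to invoke Theorem \ref{genmod}(ii) for the inclusion $A \subseteq A \rtimes_\alpha G = M$ and then translate the resulting description of $Y$ through the Feldman--Moore dictionary. Freeness of $\alpha$, together with the abelianness of $A$, makes each nontrivial $\alpha_g$ properly outer on $A$ (an inner automorphism of an abelian algebra is the identity, which would force fixed points), so the hypotheses of Theorem \ref{genmod} are met.

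First I would apply Theorem \ref{genmod} to conclude that the $w^*$-closed $A$-bimodule $Y$ is $B$-closed, and that it is determined by a family $\{z_g : g \in G\}$ of projections in $\mathcal{Z}(A) = A$ via
\[ Y = \overline{\mathrm{span}}^B \{a z_g u_g : a \in A,\ g \in G\}. \]
Since $A = L^\infty(X,\mu)$, each $z_g$ is the characteristic function $\chi_{E_g}$ of a measurable $E_g \subseteq X$. Next I would transfer this to $M(R,1)$. Freeness of $\alpha$ makes the graphs $G_g := \{(g\cdot x, x) : x \in X\}$ pairwise disjoint, yielding the partition $R = \bigsqcup_{g \in G} G_g$. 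Under $M \cong M(R,1)$, the unitary $u_g$ is represented by a function supported on $G_g$, so $a u_g$ (under $a \mapsto a\xi$) corresponds to a function on $R$ supported on $G_g$. Define
\[ S := \bigsqcup_{g \in G} \{(g\cdot x, x) : x \in E_g\}, \]
which is $\nu$-measurable by construction.

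Finally I would verify $Y = Y_S$. For $Y \subseteq Y_S$: each generator $a z_g u_g$ corresponds to a function supported in $\{(g\cdot x, x) : x \in E_g\} \subseteq S$, hence lies in $Y_S$; as $Y_S$ is $w^*$-closed (noted in the text preceding the theorem), it is $B$-closed by Theorem \ref{genmod}(ii), so the $B$-closed span lies in $Y_S$. For the reverse inclusion, let $b \in Y_S$ with Bures-convergent Fourier series $b = \sum_g b_g u_g$ (Mercer's theorem). The $g$-th coefficient $b_g$ corresponds under the dictionary to the restriction of $b\xi$ to $G_g$, reparametrized by the second coordinate; since $b\xi$ is supported on $S$, this restriction is supported on $E_g$, so $b_g z_g = b_g$ and hence $b_g u_g = b_g z_g u_g \in Y$. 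Partial sums then lie in $Y$, and $B$-closedness of $Y$ gives $b \in Y$. The main obstacle is pinning down the dictionary between the crossed-product Fourier coefficient $b_g = E_M(b u_g^{-1})$ and the restriction of $b\xi$ to $G_g$; freeness of $\alpha$ is essential here, since it guarantees the $G_g$'s are disjoint so that an $L^2(R,\nu)$-function has a unique ``coefficient-wise'' decomposition along them compatible with the Fourier expansion. Once this identification is verified, the rest is bookkeeping with the $B$-topology.
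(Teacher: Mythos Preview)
Your proposal is correct. Both your argument and the paper's rest on Theorem~\ref{genmod}(ii): since $G$ has the \emph{AP} and the action is free (hence properly outer on the abelian $A$), the $w^*$-closed and $B$-closed $A$-bimodules in $A\rtimes_\alpha G$ coincide. The paper, however, does not carry out the Feldman--Moore translation by hand; it simply invokes the equivalence established in \cite{CPZ} (that coincidence of the $w^*$- and $B$-closed $A$-bimodules is equivalent to the Spectral Theorem for Bimodules for a Cartan inclusion with separable predual) and concludes immediately. Your route is more explicit: you build $S$ directly from the projections $z_g=\chi_{E_g}$ supplied by Theorem~\ref{genmod}(i), and then verify $Y=Y_S$ via the Fourier-coefficient/graph-slice identification. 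This is effectively reproving, in the crossed-product case, the direction of the \cite{CPZ} equivalence that is needed here. The payoff of your approach is self-containment and a concrete description of $S$; the paper's approach is shorter and leverages existing machinery. The identification you flag as ``the main obstacle'' (matching $E_M(bu_g^{-1})$ with the restriction of $b\xi$ to the graph $G_g$) is indeed the only nontrivial step, and freeness is exactly what makes the slices $G_g$ disjoint so that this identification goes through.
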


\section{Intermediate von Neumann algebras}\label{intermed}

Our main objective in this section is to establish Theorem \ref{Struct} which characterizes the von Neumann algebras lying between $M$ and $\mxg$. We will require
some preliminary results that rely on the $B$-topology. The next lemma establishes the $B$-continuity of some conditional expectations and $*$-automorphisms, as well as the relations between them. This will be refined in Corollary \ref{NBcont}, Lemma \ref{Conlem4}, and Corollary \ref{autocont}, but these subsequent results cannot be proved at this stage. All will be used in Section \ref{Mercersect}. 

\begin{lem}\label{Bcont}
Let a discrete group $G$ act on a von Neumann algebra $M$ by properly outer $*$-automorphisms and let $E_M: \mxg \to M$ be the faithful normal conditional expectation.
\begin{itemize}
\item[(i)]
Let $\sigma$ be a $*$-automorphism of $\mxg$ such that $\sigma(M)=M$. Then 
\begin{equation}\label{autocond}
\sigma^{-1}\circ E_M\circ\sigma =E_M
\end{equation}
 and $\sigma$ is $B$-continuous.
\item[(ii)]
Let $N$ be a von Neumann algebra satisfying $M\subseteq N\subseteq \mxg$ and suppose that there exists a faithful normal conditional expectation $E_N:\mxg\to N$. Then
$E_N$ is $B$-continuous.
\end{itemize}
\end{lem}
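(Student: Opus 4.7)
The plan for part (i) is to deduce \eqref{autocond} from uniqueness of the normal conditional expectation from $\mxg$ onto $M$ (a consequence of proper outerness) and then to derive $B$-continuity as a routine consequence. Since $\sigma(M)=M$, the map $F:=\sigma^{-1}\circ E_M\circ\sigma$ is a normal conditional expectation of $\mxg$ onto $\sigma^{-1}(M)=M$, so it suffices to show that $E_M$ is the only such map. For any normal conditional expectation $F:\mxg\to M$ and any $g\in G$, $M$-bimodularity yields $\alpha_g(x)F(g)=F(gx)=F(g)x$ for all $x\in M$. A polar decomposition of $F(g)$ shows that its range and support projections lie in $\Z(M)$ and that $F(g)$ witnesses an inner implementation of $\alpha_g$ on the corresponding cut-down; proper outerness then forces $F(g)=0$ for $g\ne e$. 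Thus $F$ and $E_M$ agree on the $\ast$-algebra generated by $M$ and $\{g:g\in G\}$, which is $w^*$-dense in $\mxg$, and normality of both maps gives $F=E_M$. Given \eqref{autocond}, $B$-continuity of $\sigma$ follows immediately: for $y\in\mxg$ and a normal state $\omega$ on $M$,
\[
\omega(E_M(\sigma(y)^*\sigma(y)))=\omega(\sigma(E_M(y^*y)))=(\omega\circ\sigma|_M)(E_M(y^*y)),
\]
and since $\sigma|_M$ is a normal $\ast$-automorphism of $M$, $\omega\circ\sigma|_M$ is another normal state on $M$; applying this with $y=x-x_\lambda$ for a $B$-convergent net $x_\lambda\to x$ gives the conclusion.

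For part (ii) the strategy is to establish the tower identity $E_M\circ E_N=E_M$ via the same uniqueness statement and then to invoke the Kadison--Schwarz inequality. Since $M\subseteq N$, $E_N$ restricts to the identity on $M$, so $E_N\circ E_M=E_M$ and hence $(E_M\circ E_N)^2=E_M\circ(E_N\circ E_M)\circ E_N=E_M\circ E_N$. Thus $E_M\circ E_N$ is a normal conditional expectation of $\mxg$ onto $M$, and the uniqueness from part (i) forces $E_M\circ E_N=E_M$. Kadison--Schwarz for $E_N$ and positivity of $E_M$ then yield
\[
E_M(E_N(y)^*E_N(y))\le E_M(E_N(y^*y))=E_M(y^*y),\qquad y\in\mxg.
\]
Pairing with any normal state on $M$ and specializing to $y=x-x_\lambda$ shows that $B$-convergence of $x_\lambda$ implies $B$-convergence of $E_N(x_\lambda)$, as required.

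The principal obstacle is the uniqueness of the normal conditional expectation onto $M$ under proper outerness. The intertwining $\alpha_g(x)F(g)=F(g)x$ is immediate from bimodularity, but converting it into a genuine $\alpha_g$-invariant central projection on which $\alpha_g$ is inner (so that proper outerness can be invoked) requires some care with the polar decomposition and the observation that both $F(g)F(g)^*$ and $F(g)^*F(g)$ lie in $\Z(M)$. Once this uniqueness is in hand, both parts of the lemma reduce to formal manipulations with Kadison--Schwarz and the invariance of the class of normal states on $M$ under normal $\ast$-automorphisms.
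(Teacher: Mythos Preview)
Your proof is correct and follows essentially the same route as the paper: uniqueness of the normal conditional expectation onto $M$ gives \eqref{autocond}, from which $B$-continuity of $\sigma$ is immediate, and for part (ii) the identity $E_M\circ E_N=E_M$ together with the Kadison--Schwarz inequality yields the result. The only difference is that the paper simply invokes the uniqueness of $E_M$ and the relation $E_M\circ E_N=E_M$ as known facts, whereas you supply the underlying argument (the intertwining $\alpha_g(x)F(g)=F(g)x$ plus proper outerness forcing $F(g)=0$); your sketch there is correct, though the step producing an $\alpha_g$-invariant central projection from the polar decomposition is exactly the point that, as you note, needs a line or two of care.
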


\begin{proof}
Since $\sigma^{-1}\circ E_M\circ\sigma$ is also a normal conditional expectation onto $M$, equality with $E_M$ follows immediately from uniqueness of the normal conditional
expectation.

Suppose that $\{x_\lambda\}_{\lambda\in\Lambda}$ is a net from $\mxg$ converging to 0 in the $B$-topology, and let $\omega\in M_*$ be a normal state on $M$. Then, using \eqref{autocond} for the second equality,
\begin{equation}
\omega\circ E_M(\sigma(x_\lambda)^*\sigma(x_\lambda))=\omega\circ E_M(\sigma(x_\lambda^* x_\lambda))=\omega\circ \sigma(E_M(x_\lambda^* x_\lambda))
\end{equation}
so the limit over $\lambda\in \Lambda$ is 0 since $\omega\circ \sigma$ is a normal state and $x_\lambda \to 0$ in the $B$-topology. Thus $\sigma(x_\lambda)\to 0$ in the
$B$-topology, so $\sigma$ is $B$-continuous, completing the proof of part (i).

Now suppose that $x_\lambda \to 0$ in the $B$-topology of $\mxg$. Since $E_N$ is a unital completely positive map, \cite[Corollary 2.8]{Choi74} gives
\begin{equation}
E_N(x_\lambda^*)E_N(x_\lambda)\leq E_N(x_\lambda^* x_\lambda),
\end{equation}
so
\begin{equation}
E_M(E_N(x_\lambda)^*E_N(x_\lambda))\leq E_M(x_\lambda^* x_\lambda),
\end{equation}
since $E_M\circ E_N=E_M$. 
Apply an arbitrary normal state $\omega$ on $M$ to this inequality to get
\begin{equation}
0\leq \omega\circ E_M(E_N(x_\lambda)^*E_N(x_\lambda))\leq \omega \circ E_M(x_\lambda^* x_\lambda)
\end{equation}
 and the last term tends to 0 when we take the limit over $\lambda\in\Lambda$. Thus $\lim_{\lambda}E_N(x_\lambda)=0$ in the $B$-topology, showing that $E_N$ is
$B$-continuous.
\end{proof}

We fix a faithful normal semifinite weight $\phi$ of $M$ with associated modular automorphism group $\{\sigma^\phi_t:t\in \mathbb R\}$ \cite[p. 92]{Tak2}. As in \cite{Haa1,Haa2}, the
dual weight $\tilde{\phi}$ on $\mxg$ is $\phi\circ E_M$. Our use of the modular automorphism groups and dual weights below is inspired by the arguments in \cite{CPZ} to obtain parallel results in the Cartan masa context.

We now consider a von Neumann algebra $N$ lying between $M$ and $\mxg$. From Lemma \ref{Mzg}, there exist $w^*$-closed ideals $\{I_g:g\in G\}$ such that $I_gg\subseteq
N$ and if $x\in N$ has Fourier series $\sum_{g\in G}x_gg$ then $x_g\in I_g$ for $g\in G$. Let $N_0$ and $\ovl{N}^B$ be respectively the $w^*$- and $B$-closures of
$\sum_{g\in G}I_gg$. Then $N_0\subseteq N\subseteq \ovl{N}^B$ and $N_0$ is a von Neumann algebra while a similar statement for $\ovl{N}^B$ is not clear since the adjoint
operation is not in general $B$-continuous. Nevertheless, we will show equality of these three spaces.

Recall that a  von Neumann algebra $M$ is said to be countably decomposable if any family of pairwise orthogonal nonzero projections in $M$ is at most countable. The term $\sigma$-finite is also used \cite[Definition II.3.18]{Tak1}.
\begin{thm}\label{exp}
Let a discrete group $G$ act on a von Neumann algebra $M$ by properly outer $*$-automorphisms and let $N$ be a von Neumann algebra lying between $M$ and $\mxg$. Then
$N_0=N=\ovl{N}^B$, and there exists a faithful normal conditional expectation $E_N:\mxg\to N$.
\end{thm}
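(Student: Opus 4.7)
The plan is to combine the structural Lemma \ref{Mzg} with Takesaki's criterion \cite{Tak0} for the existence of normal conditional expectations via invariance of the modular automorphism group. The inclusions $N_0\subseteq N\subseteq\ovl{N}^B$ are immediate: Lemma \ref{Mzg}(i) gives $\sum_g I_gg\subseteq N$, so $N_0\subseteq N$; conversely, by Lemma \ref{Mzg}(ii) every $x\in N$ has Fourier coefficients $x_g\in I_g$, and since the Fourier series converges to $x$ in the $B$-topology \cite{Mer0}, we have $N\subseteq\ovl{N}^B$. It therefore suffices to produce a faithful normal conditional expectation $E_{N_0}:\mxg\to N_0$ and then exploit its $B$-continuity (Lemma \ref{Bcont}(ii)) to force $\ovl{N}^B\subseteq N_0$.

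First I would confirm that $N_0$ is a von Neumann algebra. Because $N$ is a $*$-subalgebra of $\mxg$ containing every $I_gg$, matching $g$-coefficients of products $(mz_{g_1}g_1)(nz_{g_2}g_2)$ and adjoints $(mz_gg)^*$ forces the algebraic relations $z_{g_1}\alpha_{g_1}(z_{g_2})\leq z_{g_1g_2}$ and $\alpha_{g^{-1}}(z_g)\leq z_{g^{-1}}$. These say precisely that $\sum_g I_gg$ is closed under multiplication and involution, so its $w^*$-closure $N_0$ is a von Neumann algebra contained in $N$.

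The heart of the argument is the application of Takesaki's theorem to the dual weight $\tilde\phi=\phi\circ E_M$ on $\mxg$, where $\phi$ is the chosen faithful normal semifinite weight on $M$ (a state in the countably decomposable case). The crucial structural input is that the modular group of the dual weight preserves each Fourier component $Mg$: by \cite{Haa1,Haa2}, $\sigma_t^{\tilde\phi}|_M=\sigma_t^\phi$ and $\sigma_t^{\tilde\phi}(g)\in Mg$ for all $g\in G$ and $t\in\mathbb R$. Since the modular group of any f.n.s.\ weight fixes the center of its algebra, $\sigma_t^\phi(z_g)=z_g$ for all $t$, and a short computation using the centrality of $z_g$ then gives $\sigma_t^{\tilde\phi}(I_gg)\subseteq I_gg$. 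Hence $\sigma_t^{\tilde\phi}$ preserves $\sum_g I_gg$ and, by $w^*$-continuity, also $N_0$. Granted the semifiniteness of $\tilde\phi|_{N_0}$, which is inherited from $\tilde\phi|_M=\phi$ and is automatic when $\phi$ is a state, Takesaki's theorem yields a $\tilde\phi$-preserving faithful normal conditional expectation $E_{N_0}:\mxg\to N_0$.

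Lemma \ref{Bcont}(ii) then gives the $B$-continuity of $E_{N_0}$. Since $E_{N_0}$ restricts to the identity on the $B$-dense subspace $\sum_g I_gg$ of $\ovl{N}^B$, and the $B$-topology is Hausdorff by faithfulness of $E_M$, uniqueness of $B$-limits forces $E_{N_0}|_{\ovl{N}^B}=\mathrm{id}$, so $\ovl{N}^B\subseteq N_0$. Combined with $N_0\subseteq N\subseteq\ovl{N}^B$, the three spaces coincide and $E_N:=E_{N_0}$ is the required conditional expectation. The main obstacle I anticipate is verifying the semifiniteness of $\tilde\phi|_{N_0}$ outside the countably decomposable setting; this likely requires either a direct weight-theoretic argument using finite-weight elements of the form $mg$ or a reduction to the countably decomposable case via a central partition of the identity.
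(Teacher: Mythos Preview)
Your approach is correct and essentially matches the paper's: both apply Takesaki's criterion via invariance of $N_0$ under the modular group of the dual weight $\tilde\phi=\phi\circ E_M$, then use the $B$-continuity of the resulting expectation (Lemma \ref{Bcont}(ii)) to collapse the chain $N_0\subseteq N\subseteq\ovl{N}^B$. Two minor differences are worth noting. First, the semifiniteness issue you flag is exactly what the paper handles, and it does so by a particular choice of $\phi$: a maximal family $\{p_\lambda\}$ of countably decomposable projections in $M$ summing to $1$, faithful normal states $\phi_\lambda$ on $p_\lambda Mp_\lambda$, and $\phi(x)=\sum_\lambda\phi_\lambda(p_\lambda xp_\lambda)$; then $\tilde\phi(p_\lambda xp_\lambda)<\infty$ for $x\in N^+$ gives semifiniteness on $N$ (hence on $N_0$) directly. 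Second, for the final collapse the paper computes $E_{N_0}(g)$ explicitly---showing $g^{-1}E_{N_0}(g)\in\mathcal Z(M)$ and deducing $E_{N_0}(mg)=mz_gg$---and then reads off that the range of $E_{N_0}$ is $\ovl{N}^B$; your $B$-density argument is a cleaner shortcut for Theorem \ref{exp} itself, but the paper's explicit formula is reused in Theorem \ref{Struct}, so the extra work is not wasted.
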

\begin{proof}
By \cite[Proposition II.3.19]{Tak1}, a projection $p\in M$ is countably decomposable if and only if $pMp$ has a faithful normal state. A simple maximality argument gives a family $\{p_\lambda\}_{\lambda\in\Lambda}$ of countably decomposable projections in $M$ with sum 1, and we specify a  faithful normal semifinite weight $\phi$ on $M$ by first choosing  faithful normal states $\phi_\lambda$ on $p_\lambda M p_\lambda$ for $\lambda\in\Lambda$, and then defining
\begin{equation}
\phi(x)=\sum_{\lambda\in\Lambda}\phi_\lambda (p_\lambda xp_\lambda),\ \ \ x\in M^+.
\end{equation}
Let $\tilde{\phi}=\phi\circ E_M$ be the dual weight on $\mxg$. For $x\in N^+$,
$\tilde{\phi}(p_\lambda xp_\lambda)=\phi_\lambda(p_\lambda E_M(x)p_\lambda)<\infty$, and it follows easily from this that 
$\tilde{\phi}|_N$ is semifinite, a requirement for quoting \cite[Theorem IX.4.2]{Tak2} below, and which dictated our choice of $\phi$.

 From \cite[Theorem X.1.17]{Tak2} $\sigma^{\tilde{\phi}}_t$ and $\sigma^\phi_t$ agree on $M$ and there exist unitaries $\{u_{g,t}\in M:g\in G,\ t\in \mathbb{R}\}$
($(D\phi\circ\alpha_g:D\phi)_t$ in the notation of \cite[Definition VIII.3.4]{Tak2}) so that
\begin{equation}\label{exp.1}
\sigma^{\tilde{\phi}}_t(g)=gu_{g,t}=\alpha_g(u_{g,t})g,\ \ \ t\in \mathbb{R},\ \ \ g\in G.
\end{equation}
Let $\{I_g:g\in G\}$ be the ideals defining $N_0$ and let $\{z_g:g\in G\}$ be the central projections such that $I_g=Mz_g$. Each modular automorphism leaves $\mathcal{Z}(M)$ pointwise fixed, by \cite[Proposition VI.1.23]{Tak2}, so
\begin{equation}\label{exp.2}
\sigma^{\tilde{\phi}}_t(xz_g)=\sigma^\phi_t(xz_g)=\sigma^\phi_t(x)z_g\in I_g.
\ \ \ x\in M,\ \ \ g\in G\ \ \ t\in \mathbb{R},
\end{equation}
Thus
\begin{equation}\label{exp.3}
\sigma^{\tilde{\phi}}_t(xz_gg)=\sigma^\phi_t(x)z_g\alpha_g(u_{g,t})g\in I_gg,\ \ \ x\in M,\ \ \ g\in G,\ \ \ t\in \mathbb{R},
\end{equation}
from \eqref{exp.1}. Thus $\sigma^{\tilde{\phi}}_t(I_gg)\subseteq I_gg$ and equality holds by replacing $t$ by $-t$. It follows that $\sigma^{\tilde{\phi}}_t(N_0)=N_0$ for
$t\in \mathbb{R}$, and so \cite[Theorem IX.4.2]{Tak2} (see also \cite{Tak0}) gives the existence of a
 normal conditional expectation $E_{N_0}:\mxg\to N_0$ satisfying $\tilde{\phi}=\tilde{\phi}\circ E_{N_0}$, and which is thus faithful.  

For $g\in G$,
\begin{equation}\label{exp.4}
gx=\alpha_g(x)g,\ \ \ x\in M,
\end{equation}
so
\begin{equation}\label{exp.5}
E_{N_0}(g)x=\alpha_g(x)E_{N_0}(g)=gxg^{-1}E_{N_0}(g),\ \ \ x\in M,\ \ \ g\in G,
\end{equation}
and it follows that $g^{-1}E_{N_0}(g)\in M'\cap ( \mxg)=\cl{Z}(M)$. Thus there exists $w_g\in \cl{Z}(M)$ so that
\begin{equation}\label{exp.6}
E_{N_0}(g)=gw_g=\alpha_g(w_g)g\in I_gg.
\end{equation}
Consequently $\alpha_g(w_g)z_g=\alpha_g(w_g)$. Since $E_{N_0}(z_gg)=z_gg$, we obtain $z_g\alpha_g(w_g)g=z_gg$ from \eqref{exp.6}, and hence that $\alpha_g(w_g)=z_g$. Thus $E_{N_0}(g)=z_gg$, so
$E_{N_0}(xg)=xz_gg$ for $x\in M$ and $g\in G$. From Lemma \ref{Bcont}, the $B$-continuity of $E_{N_0}$ gives the formula
\begin{equation}\label{exp.7}
E_{N_0}(\sum_{g\in G} x_gg)=\sum_{g\in G} x_gz_gg
\end{equation}
for each $x\in \mxg$ with Fourier series $\sum_{g\in G} x_gg$. It follows that the range of $E_{N_0}$ is $\ovl{N}^B$, proving that $N_0=N=\ovl{N}^B$.
\end{proof}

If a von Neumann algebra $N$ lying between $M$ and $\mxg$ is expressible in terms of a  system of central projections $\{z_g\in \cl{Z}(M):g\in G\}$ by
\begin{equation}
N=\ovl{{\mathrm{span}}}^{w^*}\{Mz_gg:g\in G\},
\end{equation}
then the requirements of being unital, $*$-closed, and multiplicatively closed lead to three respective conditions that must be satisfied:
\begin{align}
z_e&=1,\label{c1a}\\
z_g&=\alpha_g(z_{g^{-1}}),\ \ \ g\in G,\label{c2a}\\
z_{gh}&\geq z_g\alpha_g(z_h),\ \ \ g,h\in G.\label{c3a}
\end{align}
We thank the referee for pointing out that these conditions are equivalent to the formally stronger ones
\begin{align}
z_e&=1,\label{c1}\\
z_g&=\alpha_g(z_{g^{-1}}),\ \ \ g\in G,\label{c2}\\
z_gz_{gh}&= z_g\alpha_g(z_h),\ \ \ g,h\in G.\label{c3}
\end{align}
It is clear that \eqref{c3} implies \eqref{c3a}. For the reverse, we first note  that \eqref{c2a} and \eqref{c3a} imply that
\begin{equation}
\alpha_{g^{-1}}(z_gz_{gh})=\alpha_{g^{-1}}=z_{g^{-1}}\alpha_{g^{-1}}(z_{gh})\leq z_h,
\end{equation}
and so $z_gz_{gh}\leq \alpha_g(z_h)$. Thus $z_gz_{gh}\leq \alpha_g(z_h)$ and so $z_gz_{gh}\leq z_g\alpha_g(z_h)$. Then \eqref{c3} follows from this inequality and \eqref{c3a}.
As we now show, the conditions \eqref{c1}--\eqref{c3} are sufficient for characterizing the intermediate von Neumann algebras.

\begin{thm}\label{Struct}
Let $G$ be a discrete group acting on a von Neumann algebra $M$ by properly outer $*$-automorphisms and let $N$ be a linear space satisfying
$M\subseteq N \subseteq \mxg$. Then $N$ is a von Neumann algebra if and only if there exists a set  
 $\{z_g\in \cl{Z}(M):g\in G\}$  of central projections satisfying the conditions \eqref{c1}--\eqref{c3} and such that
\begin{equation}\label{Struct.1}
N=\ovl{{\mathrm{span}}}^{w^*}\{Mz_gg:g\in G\}.
\end{equation}

If this is the case, then $N$ is $B$-closed and there is a faithful normal  conditional expectation $E_N:\mxg\to N$. Moreover, $E_N$ is given by
 \begin{equation}\label{Struct.2}
E_N(x)=\sum_{g\in G} x_gz_gg, \ \ \ x\in \mxg,
\end{equation}
with convergence in the $B$-topology, where $\sum_{g\in G}x_gg$ is the Fourier series of $x$.
\end{thm}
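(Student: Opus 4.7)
The plan is to leverage Lemma \ref{Mzg} and Theorem \ref{exp}, which have done essentially all of the heavy analytic lifting. The genuinely new content is purely algebraic: verifying that relations \eqref{c1}--\eqref{c3} are both necessary and sufficient for the given $w^*$-closed span to be a unital $*$-subalgebra of $\mxg$.

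For the forward implication, assume $N$ is a von Neumann algebra. Lemma \ref{Mzg} supplies central projections $\{z_g : g \in G\}$ with $Mz_g g \subseteq N$ and with Fourier coefficients $x_g \in Mz_g$ for every $x \in N$, and the equality $N = \ovl{\mathrm{span}}^{w^*}\{Mz_g g : g \in G\}$ is precisely the identity $N_0 = N$ already established in Theorem \ref{exp}. I would then read off the relations \eqref{c1}--\eqref{c3} from the algebraic structure of $N$: condition \eqref{c1} is immediate from $1 \in N$ having $e$-Fourier coefficient $1$; condition \eqref{c2} follows by taking the adjoint of $z_g g \in N$ and using Lemma \ref{Mzg}(ii) to deduce $\alpha_{g^{-1}}(z_g) \leq z_{g^{-1}}$, with the reverse inequality obtained by swapping $g \leftrightarrow g^{-1}$; and the weak form \eqref{c3a} follows from the product $(z_g g)(z_h h) = z_g \alpha_g(z_h) gh \in N$, upgraded to \eqref{c3} via the short argument given in the excerpt preceding the theorem.

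For the reverse implication, given $\{z_g\}$ satisfying \eqref{c1}--\eqref{c3}, direct computation with the commutation relation $g m = \alpha_g(m) g$ confirms that the algebraic span $\sum_g M z_g g$ is a unital $*$-subalgebra of $\mxg$: unitality is \eqref{c1}, adjoint-closure uses \eqref{c2}, and closure under multiplication uses \eqref{c3} to put $(m z_g g)(n z_h h) = m\alpha_g(n) z_g z_{gh} gh$ into $M z_{gh} gh$. The $w^*$-closure is therefore a von Neumann algebra between $M$ and $\mxg$. The $B$-closedness of $N$ and the existence of the faithful normal conditional expectation $E_N : \mxg \to N$ are exactly the content of Theorem \ref{exp}; the explicit formula \eqref{Struct.2} is equation \eqref{exp.7} of that proof. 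I do not anticipate a serious technical obstacle: the hard analytic work --- invoking modular theory and the Takesaki conditional-expectation theorem \cite[Theorem IX.4.2]{Tak2} to obtain $E_N$ and the equality $N = \ovl{N}^B$ --- has already been carried out upstream, and what remains here is essentially the algebraic verification of \eqref{c1}--\eqref{c3} in both directions.
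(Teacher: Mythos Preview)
Your proposal is correct and follows essentially the same route as the paper: invoke Lemma \ref{Mzg} and Theorem \ref{exp} for the analytic content (the representation \eqref{Struct.1}, $B$-closedness, and the explicit conditional expectation \eqref{exp.7}), with the remaining work being the straightforward algebraic verification of \eqref{c1}--\eqref{c3}. If anything, you are more explicit than the paper about the reverse implication, where the authors simply rely on the discussion preceding the theorem to justify that the $w^*$-closed span is a $*$-algebra.
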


\begin{proof} The necessity of conditions ({\ref{c1}})--({\ref{c3}}) for $N$ to be a von Neumann algebra is a matter of simple calculation. 

Suppose that $N$ is a von Neumann algebra lying between $M$ and $\mxg$. By
Lemma \ref{Mzg}, there exist central projections $\{z_g:g\in G\}$ such that
$Mz_g=\{x\in M:xg\in N\}$, from which it follows that \eqref{c1a}--\eqref{c3a}, and hence
\eqref{c1}--\eqref{c3}, are satisfied. If we let
$N_0=\ovl{{\mathrm{span}}}^{w^*}\{Mz_gg:g\in G\}$, then we see by Theorem
\ref{exp} that $N_0=N=\ovl{N}^B$, proving that $N$ has the desired form \eqref{Struct.1}. This theorem
also establishes that there is a faithful normal conditional expectation
$E_N:\mxg\to N$ and the formula for $E_N$ in \eqref{Struct.2} has already
been proved in \eqref{exp.7}, noting that $N=N_0$.
\end{proof}

We can now improve upon Lemma \ref{Bcont} (i).

\begin{cor}\label{NBcont}
Let $G$ be a discrete group acting on a von Neumann algebra $M$ by properly outer $*$-automorphisms and let $N$ be a von Neumann algebra satisfying
$M\subseteq N \subseteq \mxg$. Let $E:N\to M$ be the restriction of $E_M$ to $N$. If $\sigma$ is a $*$-automorphism of $N$ such that $\sigma(M)=M$, then
\begin{equation}\label{NBcont.1}
\sigma^{-1}\circ E\circ \sigma =E,
\end{equation}
and $\sigma$ is $B$-continuous.
\end{cor}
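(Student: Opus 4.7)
The plan is to mirror the proof of Lemma \ref{Bcont}(i), which handled the case of a $*$-automorphism of all of $\mxg$. That argument had two ingredients: uniqueness of the normal conditional expectation onto $M$, and a direct calculation on the seminorms defining the $B$-topology. Since here $\sigma$ is only assumed to be defined on the intermediate algebra $N$, one must first establish a uniqueness statement at this level.

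The main technical step is to verify that $E:N\to M$ is the unique normal conditional expectation from $N$ onto $M$. I would argue this by combining two facts: (a) the existence, from Theorem \ref{exp}, of a faithful normal conditional expectation $E_N:\mxg\to N$ satisfying $E_M = E\circ E_N$ (this last identity follows at once from the Fourier-series formula \eqref{Struct.2} together with $E_M(x)=x_e$); and (b) the uniqueness of the normal conditional expectation $E_M:\mxg\to M$, a standard consequence of the properly outer hypothesis already invoked in Lemma \ref{Bcont}(i). Given any normal conditional expectation $F:N\to M$, the composition $F\circ E_N:\mxg\to M$ is then a normal conditional expectation onto $M$, so must coincide with $E_M$; restricting this identity to $N$ and using $E_N|_N = \mathrm{id}_N$ yields $F=E$.

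With uniqueness in hand, $\sigma^{-1}\circ E\circ \sigma$ is a normal conditional expectation from $N$ onto $\sigma^{-1}(M)=M$, and hence equals $E$, proving \eqref{NBcont.1}. Equivalently, $E\circ \sigma = \sigma\circ E$ on $N$. The $B$-continuity statement then proceeds exactly as in Lemma \ref{Bcont}(i): for a net $\{x_\lambda\}\subseteq N$ tending to $0$ in the $B$-topology, and any normal state $\omega$ on $M$, one computes
\begin{equation*}
\omega\bigl(E(\sigma(x_\lambda)^*\sigma(x_\lambda))\bigr) \;=\; (\omega\circ\sigma|_M)\bigl(E(x_\lambda^*x_\lambda)\bigr),
\end{equation*}
and $\omega\circ\sigma|_M$ is again a normal state on $M$ since $\sigma|_M$ is a $*$-automorphism of $M$; hence the right-hand side tends to $0$.

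The main (indeed only) obstacle is the uniqueness of $E:N\to M$; once that is settled, everything else is a direct reprise of the argument used for Lemma \ref{Bcont}(i).
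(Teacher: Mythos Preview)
Your argument is correct. The one point of difference from the paper is how uniqueness of the normal conditional expectation $E:N\to M$ is obtained. The paper appeals directly to Connes's criterion \cite[Th\'{e}or\`{e}me 1.5.5(a)]{Con}: since $M'\cap(\mxg)=\mathcal{Z}(M)$, one has $M'\cap N\subseteq M$, and this forces the normal conditional expectation $N\to M$ to be unique (the paper actually writes $N'\cap(\mxg)\subseteq N$, which is also true but is the condition for uniqueness of $E_N$; the relevant inclusion for $E$ is $M'\cap N\subseteq M$). You instead bootstrap from the uniqueness of $E_M:\mxg\to M$ already used in Lemma~\ref{Bcont}(i), composing an arbitrary normal conditional expectation $F:N\to M$ with $E_N$ from Theorem~\ref{exp} to land back on $E_M$. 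Both routes are short; yours has the advantage of staying entirely within results already established in the paper, while the paper's is a one-line citation. The $B$-continuity step is identical in both.
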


\begin{proof}
By Theorem \ref{Struct} there is a faithful normal conditional expectation $E_N : \mxg\to N$. Since $N'\cap(\mxg)\subseteq  
N$, this normal expectation is unique by \cite[Th\'{e}or\`{e}me 1.5.5(a)]{Con} and \eqref{NBcont.1} follows.

The proof that $\sigma$ is $B$-continuous is identical to the argument for the corresponding statement in Lemma \ref{Bcont}.
\end{proof}

\begin{rem} \quad (i) When $M$ is a factor, the central projections of
\eqref{c1}--\eqref{c3} are either 0 or 1, and these conditions easily imply
that $H:=\{g\in G:z_g=1\}$ is a subgroup of $G$. Thus we recapture the
result of \cite{Cho,ILP} for separable preduals that the intermediate factors in this case are all
of the form $\mxh$ where $H\subseteq G$ is a subgroup.

\noindent (ii)\quad From \eqref{Struct.2}, the map $\sum x_gg\mapsto \sum
x_gz_gg$ is a completely positive contraction, where the $z_g$'s satisfy
\eqref{c1}--\eqref{c3}. A simple proof of this would be interesting as we
have been unable to establish even the boundedness of this map by direct
methods.
\end{rem}

\section{Central groupoid normalizers}\label{groupoidsect}

 Let $\theta$ be a
properly outer $*$-automorphism of a countably decomposable von Neumann algebra $M$. Connes's noncommutative Rokhlin theorem uses \cite[Theorem 1.2.1]{Con}, which states that for each nonzero projection $q\in M$
and $\vp 
> 0$, there exists a nonzero projection $p\leq q$ such that 
$\| p\theta(p)\|<\vp$. This $*$-automorphism induces an action of $\mathbb Z$ on $M$ by $n\mapsto \theta^n$, $n\in \mathbb Z$, and there is a unitary $u\in
M\rtimes_\theta \mathbb{Z}$ that normalizes $M$ and implements $\theta$ as $x\mapsto uxu^*$, $x\in M$. The inequality $\| p\theta(p)\|<\vp$ is then equivalent to $\|
pup\|<\vp$. We also note that, in this situation, we have $E_M(u)=0$ (see Lemma \ref{Conlem2} below). Our first objective is to give a slightly more general version of the latter formulation that applies to certain groupoid normalizers of $M$ (see Definition \ref{groupoiddef}), and where we
drop the assumption of countable decomposability (Lemma \ref{Conlem3}). The relevance of groupoid normalizers to the types of problems considered here was established in \cite{Cam}. The most important result of the section is Lemma \ref{ideal}, which is used in Section \ref{Mercersect} to show that the natural candidates for the C$^*$-envelopes of certain operator spaces are indeed the correct ones (see Lemma \ref{env}).

Before proceeding, we note that the assumption of countable decomposability in \cite[Theorem 1.2.1]{Con} is unnecessary. 
Using \cite[Prop. II.3.19]{Tak1}, any nonzero projection $q\in M$ dominates a
nonzero countably decomposable projection $q_1\in M$ and the projection $e:=\vee \{\theta^n(q_1):n\in \mathbb{Z}\}$ is  countably decomposable and is $\theta	$-invariant. Then $eMe$ is a
countably decomposable von Neumann algebra on which $\theta$ acts as a properly outer $*$-automorphism, so the original version of  
\cite[Theorem 1.2.1]{Con}
can be applied to the pair $(\theta|_{eMe},q_1)$ to produce a nonzero projection $p\leq q_1\leq q$ so that $\| p\theta(p)\|<\vp$.

The next lemma is an immediate consequence of \cite[Lemme 1.5.6]{Con73} and so we omit the proof. We thank Ami Visalter for bringing this to our attention.

\begin{lem}\label{Conlem2}
Let $M\subseteq N$ be an inclusion of von Neumann algebras with a  conditional expectation $E:N\to M$, and suppose that $M'\cap N=\mathcal{Z}(M)$. Let $u\in N$ be a
unitary normalizer of $M$ and let $\theta$ be the $*$-automorphism $
\Ad u$ of $M$. Then $\theta$ is properly outer if and only if $E(u)=0$.
\end{lem}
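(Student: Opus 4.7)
The plan is to prove the two implications separately, using the hypothesis $M' \cap N = \cl{Z}(M)$ critically in each. The key common observation is that for $a := E(u)$, applying the $M$-bimodular map $E$ to the intertwining relation $um = \theta(m)u$ yields $am = \theta(m)a$ for every $m \in M$. Left-multiplying by $u^*$ and using $u^*\theta(m) = mu^*$ gives $u^*am = mu^*a$, so $u^*a$ commutes with $M$. Hence $z := u^*a \in M' \cap N = \cl{Z}(M)$, and $a = uz$.

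For the $(\Leftarrow)$ direction, suppose $E(u) = 0$ but that $\theta$ is not properly outer, so there exist a nonzero $\theta$-invariant central projection $p \in \cl{Z}(M)$ and a unitary $v \in Mp$ with $\theta|_{Mp} = \Ad v$. Since $p$ is $\theta$-invariant, $u$ and $p$ commute, and the element $c := v^*up \in pNp$ satisfies $cmc^* = v^*u m u^* v = v^*\theta(m)v = m$ for every $m \in Mp$. Using the standard identification
\begin{equation}
(Mp)' \cap pNp = p(M' \cap N)p = \cl{Z}(M)p,
\end{equation}
we conclude $c \in \cl{Z}(M)p$. A short computation gives $c^*c = pu^*(vv^*)up = p\theta^{-1}(p)p = p$ and $cc^* = v^*\theta(p)v = p$, so $c$ is a unitary in the corner $\cl{Z}(M)p$, and $up = vc \in M$. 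Then $up = E(up) = E(u)p = 0$, contradicting $u$ being unitary and $p \neq 0$.

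For the $(\Rightarrow)$ direction, I would prove the contrapositive: if $a = E(u) \neq 0$, then $\theta$ is not properly outer. Let $q \in \cl{Z}(M)$ be the support projection of the nonzero central element $z$, and write the polar decomposition $z = v_z|z|$ inside the abelian algebra $\cl{Z}(M)$, so $v_z^*v_z = v_zv_z^* = q$. Let $f_n$ be the spectral projection of $|z|$ corresponding to $[1/n,\infty)$; then $|z|f_n$ is invertible in $\cl{Z}(M)f_n$ with bounded inverse $|z|^{-1}f_n \in \cl{Z}(M)$. From $a = uv_z|z| \in M$ it follows that $uv_zf_n = a\,|z|^{-1}f_n \in M$; passing to the strong limit as $f_n \nearrow q$ yields $uv_z \in M$, and hence $uq = (uv_z)v_z^* \in M$. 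Now define
\begin{equation}
P := \{p \in \cl{Z}(M) : p \text{ is a projection and } up \in M\}.
\end{equation}
Then $P$ contains $q$, is closed under finite joins and monotone increasing strong limits, and is stable under $\theta^{\pm 1}$: if $p \in P$ and $w := up \in M$, then $u\theta(p) = u^2pu^* = \theta(w) \in M$ and $u\theta^{-1}(p) = pu = \theta^{-1}(w) \in M$, so $\theta^{\pm 1}(p) \in P$. Consequently the supremum $p_0 := \bigvee P$ lies in $P$ and is $\theta$-invariant, with $p_0 \geq q \neq 0$. Setting $v_0 := up_0 \in M$, one checks $v_0^*v_0 = v_0v_0^* = p_0$ and $v_0mv_0^* = umu^* = \theta(m)$ for $m \in Mp_0$, so $\theta|_{Mp_0} = \Ad v_0$ is inner on the nonzero $\theta$-invariant corner $Mp_0$, contradicting proper outerness.

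The main obstacle is the $(\Rightarrow)$ direction: producing a \emph{genuinely $\theta$-invariant} central projection on which $\theta$ becomes inner. The support $q$ of $z$ is the natural first candidate but need not be $\theta$-invariant, and the remedy is to pass from $q$ to the supremum $p_0$ of the entire family $P$ and verify the non-obvious stability of $P$ under $\theta^{\pm 1}$, which ultimately rests on the observation that $\theta^{\pm 1}$ maps $up \in M$ to $u\theta^{\pm 1}(p) \in M$.
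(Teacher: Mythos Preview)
Your proof is correct. The key intertwining relation $am=\theta(m)a$ for $a=E(u)$ is the right starting point, and both directions are handled cleanly. The $(\Leftarrow)$ argument is standard; the $(\Rightarrow)$ argument is where the substance lies, and your passage from the support projection $q$ of $z$ to the $\theta$-invariant supremum $p_0=\bigvee P$ is the correct way to manufacture an invariant central projection on which $\theta$ is inner. The stability of $P$ under $\theta^{\pm1}$ via $u\theta^{\pm1}(p)=\theta^{\pm1}(up)$ is exactly the trick needed, and the spectral/polar approximation showing $uq\in M$ from $uz=a\in M$ is carried out properly.

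By way of comparison, the paper does not give a proof at all: it simply observes that the lemma is an immediate consequence of \cite[Lemme~1.5.6]{Con73} and omits the details. Connes's lemma gives the equivalence between proper outerness of $\theta$ and the nonexistence of a nonzero $a\in M$ satisfying $am=\theta(m)a$ for all $m\in M$; once that is in hand, the result follows directly from the intertwining relation for $E(u)$ in one direction and from the observation $up\in M\Rightarrow E(u)p=up\neq 0$ in the other. Your argument is thus a self-contained re-derivation of the relevant piece of Connes's result in this specific setting. The citation buys brevity; your approach buys independence from the external reference and makes transparent exactly which structural features (the relative commutant condition and $M$-bimodularity of $E$) are doing the work.
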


\begin{lem}\label{Conlem3}
Let $M\subseteq N$ be an inclusion of von Neumann algebras such that $M'\cap N=\mathcal{Z}(M)$ and let $E:N\to M$ be a conditional expectation.
\begin{itemize}
\item[(i)] Let $v\in \gnmn$ satisfy $E(v)=0$. Given $\vp>0$ and a nonzero projection $g\in M$, there exists a nonzero projection $f\in M$ so that $f\leq g$ and $\|
fvf\|<\vp$.
\item[(ii)]
Let $v_1,\ldots ,v_k\in\gnmn$ satisfy $E(v_i)=0$, $1\leq i\leq k$. Given a nonzero projection $g\in M$ and elements $x_i,\ldots ,x_k\in M$, there exists a sequence
$\{e_n\}_{n=1}^\infty$ of nonzero projections in $M$ such that $g\geq e_1\geq e_2\ldots$ and
\begin{equation}
\lim_{n\to\infty} \| e_nx_iv_ie_n\|=0,\ \ \ 1\leq i\leq k.
\end{equation}
\end{itemize}
\end{lem}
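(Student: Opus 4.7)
For part (i), I would split into cases based on how $g$ meets the central projections $p := vv^*$ and $q := v^*v$. If $g(1-p) \neq 0$, take $f := g(1-p)$, a nonzero projection with $f \leq g$; then $fv = (fp)v = 0$ using centrality of $p$ together with $pv = v$. If $g \leq p$ but $g(1-q) \neq 0$, take $f := g(1-q)$; then $vf = v(1-q)g = 0$ via $v(1-q) = v - vq = 0$ and centrality of $q$. In either case $\|fvf\| = 0$.

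The substantive case is $g \leq r := pq$. Here $v$ implements a $*$-isomorphism $\theta : Mq \to Mp$ via $\theta(x) = vxv^*$. A direct calculation gives $\|fvf\| = \|f\theta^{-1}(f)\|$ for any projection $f \leq r$: since $f \leq p$, $v^*fv$ is a projection equal to $\theta^{-1}(f) \in Mq$, so $\|fvf\|^2 = \|fv^*fvf\| = \|f\theta^{-1}(f)f\| = \|f\theta^{-1}(f)\|^2$. The task then reduces to finding nonzero $f \leq g$ with $\|f\theta^{-1}(f)\| < \vp$, a Rokhlin-type statement for the partial $*$-automorphism $\theta$. The key input is a partial proper outerness derived from $E(v) = 0$: no nonzero central $s \leq r$ can have $\theta(s) = s$ together with $\theta|_{Ms}$ inner. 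Indeed if such $s$ existed with $\theta|_{Ms} = \Ad w$ for a unitary $w \in Ms$, then $vs$ would be a unitary in $sNs$ implementing $\theta|_{Ms}$ (using $vsv^* = s$ and $sv^*vs = s$), so $w^*vs \in (Ms)' \cap sNs = \Z(M)s$ by the hypothesis $M' \cap N = \Z(M)$, forcing $vs \in Ms \subseteq M$. But then $vs = E(vs) = E(v)s = 0$ contradicts $vs$ being a unitary on $s \neq 0$. Granted this proper outerness, I would adapt Connes's Rokhlin argument from \cite[Theorem 1.2.1]{Con} to the partial $*$-isomorphism $\theta$ to produce the required $f$.

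For part (ii), I would build the chain recursively with $e_0 := g$. Given nonzero $e_{n-1} \leq g$, I apply a mild generalization of (i) that permits an extra coefficient $x \in M$ in front of $v$ (the same case analysis goes through, and the Rokhlin step requires the known extension of Connes's theorem that allows coefficients) in turn to each pair $(x_i, v_i)$, producing a nested chain $e_{n-1} \geq f^{(1)} \geq \cdots \geq f^{(k)} =: e_n$ of nonzero projections with $\|f^{(i)} x_i v_i f^{(i)}\| < 1/n$ at each step. Since compressing a sandwiched expression to a smaller projection cannot increase its norm, $\|e_n x_i v_i e_n\| \leq \|f^{(i)} x_i v_i f^{(i)}\| < 1/n$ holds for every $i$, yielding the desired sequence.

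The main obstacle is the Rokhlin-type step in the substantive case of (i): extending Connes's theorem from a genuine automorphism of $M$ to a $*$-isomorphism $\theta : Mq \to Mp$ between central corners of $M$, using the partial proper outerness established above. One approach is to rework Connes's maximality argument directly in this partial setting; another is to embed $v$ as a genuine unitary normalizer in a suitably enlarged containment $\tilde M \subseteq \tilde N$, apply the classical theorem there, and pull the conclusion back to $M$.
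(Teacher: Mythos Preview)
Your skeleton for part (i) is right, and the easy cases are correct, but the substantive case has a real gap. When $g \leq pq$ you still only have a $*$-isomorphism $\theta\colon Mq\to Mp$ between \emph{possibly different} central corners, and Connes's theorem \cite[Theorem 1.2.1]{Con} is stated for genuine automorphisms. Your partial proper outerness (``no nonzero central $s\leq r$ with $\theta(s)=s$ and $\theta|_{Ms}$ inner'') is a statement about $\theta$-invariant pieces, so by itself it does not put you in a position to invoke Connes. You acknowledge this and sketch two possible fixes, but neither is carried out, and the second (embedding into a larger $\tilde M\subseteq \tilde N$) would require constructing a unitary normalizer whose compression recovers $v$ while preserving the hypothesis $E(v)=0$; this is not routine.

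The paper closes the gap by an extra reduction you are missing. After arranging $g\leq q$, replace $v$ by $vz$ where $z$ is the \emph{central support} of $g$; the new $q$ becomes $z$, so $q$ is now the central support of $g$. This buys a clean trichotomy. If $p=q$, then $\theta$ is an honest automorphism of $Mq$; Lemma \ref{Conlem2} (from $E(v)=0$) gives proper outerness, and Connes applies directly to produce $f\leq g$ with $\|f\theta(f)\|<\vp^2$, hence $\|fvf\|<\vp$. If $p\neq q$, then either $q(1-p)\neq 0$ or $(1-q)p\neq 0$. In the first case $f:=g\,q(1-p)$ is nonzero (any nonzero central projection below the central support $q$ of $g$ meets $g$) and $fvf=0$; in the second, $f:=g\,\phi((1-q)p)$ with $\phi(x)=v^*xv$ works for the same reason. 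Thus the Rokhlin step is needed only when $\theta$ is already a genuine automorphism, and no adaptation of Connes is required.

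For part (ii) your recursion is fine, but you do not need a ``mild generalization of (i) with a coefficient.'' The paper simply absorbs the coefficient: write each $x_i$ as a linear combination of unitaries $u_i$, observe that $u_iv_i\in\gnmn$ with $(u_iv_i)(u_iv_i)^*=u_ipu_i^*=p$, $(u_iv_i)^*(u_iv_i)=q$, and $E(u_iv_i)=u_iE(v_i)=0$, and then apply part (i) as stated to the $u_iv_i$.
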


\begin{proof}
Denote by $p$ and $q$ respectively the central projections $vv^*$ and $v^*v$, and let $\theta :M\to M$ be the $*$-homomorphism $\theta(x)=vxv^*$, $x\in M$. Then $\theta$
restricts to a $*$-isomorphism of $Mq$ onto $Mp$. Similarly the $*$-homomorphism $\phi(x)=v^*xv$, $x\in M$, restricts to a $*$-isomorphism of $Mp$ onto $Mq$ and is the
inverse of $\theta$.

If $qg=0$ then $gv^*vg=0$ and so $vg=0$. Then $\|gvg\|=0$ and we may take $f$ to be $g$. Thus we may assume that $qg\ne 0$ and, replacing $g$ by $qg$, we may also
assume that $g\leq q$. We now make a further reduction. Let $z\in \mathcal{Z}(M)$ be the central support projection of $g$ and let $w=vz\in \gnmn$. Then $E(w)=0$,
$w^*w=zqz=z$, and $g\leq z$. If there exists a nonzero projection $f\leq g$ so that $\| fwf\|<\vp$, then $\| fvzf\|<\vp$ and $fvzf=fvf$, so $\| fvf\|<\vp$. Thus, by
replacing $v$ by $vz$, we may further assume that $q$ is the central support projection of $g$. There are three cases to consider.

\noindent Case 1.\quad Suppose that $p=q$. Then $\theta$ restricts to a $*$-automorphism of $Mq$ and this is properly outer by Lemma \ref{Conlem2} since $E(v)=0$. By \cite[Theorem 1.2.1]{Con} there exists a nonzero projection $f\leq g$ so that $\| f\theta(f)\|<\vp^2$. Since
\begin{equation}
\| fvf\|^2=\| fvfv^*f\|=\|f\theta(f)f\|\leq \| f\theta(f)\|<\vp^2,
\end{equation}
we obtain $\| fvf\|<\vp$.

\noindent Case 2. \quad Suppose that $q(1-p)\ne 0$. Let $f=gq(1-p)$. Since $q(1-p)$ is a central projection below the central support $q$ of $g$, we see that $f\ne 0$,
and also that $f\leq g$. Then
\begin{equation}
fvf=fpvf=gq(1-p)pvf=0.
\end{equation}

\noindent Case 3. \quad Suppose that $(1-q)p\ne 0$. Put $f=g\phi((1-q)p)\leq g$ and observe that $f\ne 0$ since $\phi((1-q)p)$ is a central projection below the central
support $q$ of $g$. Then
\begin{equation}
fv^*f=gv^*v\phi((1-q)p)v^*g\phi((1-q)p)=
gv^*(1-q)pg\phi((1-q)p)=0
\end{equation}
since $\phi((1-q)p)\leq q$. Taking adjoints gives $fvf=0$.

These three cases exhaust the possibilities, completing the proof of (i).

Now consider elements $v_i\in \gnmn$ with $E(v_i)=0$, $x_i\in M$ and a projection $g\in M$. Since $M$ is spanned by its unitaries, there is no loss of generality in
assuming that each $x_i$ is a unitary  $u_i\in M$. Then $u_iv_i\in \gnmn$ with $E(u_iv_i)=0$, so replacing each $v_i$ with $u_iv_i$ allows us to make the further
assumption that $x_i=1$.

We will construct the nonzero projections $e_n$ inductively. Take $\vp=2^{-1}$ and apply part (i) with $v=v_1$ to obtain a projection $f_1\leq g$ with $\|
f_1v_1f_1\|<2^{-1}$. Then apply (i) again with $g$ and $v$ replaced respectively by $f_1$ and $v_2$ to find a projection $f_2\leq f_1$ and $\| f_2v_2f_2\|<2^{-1}$.
Continuing in this way, we find nonzero projections $g\geq f_1\geq f_2 \ldots \geq f_k$ in $M$ so that $\| f_iv_if_i\|<2^{-1}$ for $1\leq i\leq k$. Then take $e_1$ to be
$f_k$ and note that
\begin{equation}
\|e_1v_ie_1\|<2^{-1},
\ \ \ 1\leq i\leq k.
\end{equation}

Assuming that $g\geq e_1 \ldots \geq e_n$ have been constructed to satisfy
\begin{equation}
\| e_jv_ie_j\|<2^{-j},\ \ \ 1\leq i\leq k,\ \ \ 1\leq j\leq n,
\end{equation}
we repeat the argument with $e_n$ in place of $g$ and $\vp=2^{-(n+1)}$ to find $e_{n+1}\leq e_n$ satisfying
\begin{equation}
\| e_{n+1}v_ie_{n+1}\|<2^{-(n+1)},\ \ \ 1\leq i\leq k,
\end{equation}
completing the proof of part (ii).
\end{proof}

The hypotheses of the following three results have been chosen to be satisfied by the C$^*$-algebra C$^*(X_0)$ of Section \ref{Mercersect} (see equation \eqref{X0def}).

\begin{lem}\label{Conlem4}
Let $M\subseteq N$ be an inclusion of von Neumann algebras
with $M'\cap N=\mathcal{Z}(M)$, and let $A$ be a
C$^*$-algebra satisfying $M\subseteq A \subseteq N$.
\begin{itemize}
\item[(i)]
Suppose that $v\in A\cap \gnmn$. Then there exists a central
projection $z\in \mathcal{Z}(M)$ such that $zv\in M$ and
$E((1-z)v)=0$ for any conditional expectation $E:A\to M$.
\item[(ii)]
If $A$ is the norm closed span of $A\cap \gnmn$ and $E_1,E_2
:A\to M$ are conditional expectations, then $E_1=E_2$.
\item[(iii)]
If $A$ is the norm closed span of $A\cap \gnmn$, $\theta$ is
a $*$-automorphism of $A$ such that $\theta(M)=M$, and
$E:A\to M$ is a conditional expectation, then
$E=\theta^{-1}\circ E \circ \theta$.
\end{itemize}
\end{lem}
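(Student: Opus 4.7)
The plan is to establish (i) in detail, from which (ii) and (iii) follow by short arguments. For (i), fix $v \in A \cap \gnmn$ with $p = vv^*, q = v^*v \in \Z(M)$ and the associated $*$-isomorphism $\theta : Mq \to Mp$ given by $\theta(x) = vxv^*$. I would set $z \in \Z(M)$ to be the supremum of the family $\mathcal{F} := \{z' \in \Z(M) : z'v \in M\}$; this family is directed under $\leq$ and closed under $\sigma$-strong limits of increasing nets since $M$ is a von Neumann algebra, so $z \in \mathcal{F}$ and $zv \in M$. For each $z' \in \mathcal{F}$, $z'v$ is a partial isometry in $M$ whose central initial and final projections $z'q$ and $z'p$ are equivalent and hence equal; after discarding the part below $(1-p)(1-q)$ on which $z'v$ is automatically zero, one may assume $z' \leq pq$, and the same central-equivalence argument applied to the unitary $z'v$ in $Mz'$ (which implements $\theta|_{Mz'}$) forces $\theta(z') = z'$. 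In particular $\theta(z) = z$, so $vz = zv$ via $vzv^* = \theta(z) = z$.

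Now fix a conditional expectation $E : A \to M$ and set $a := E((1-z)v) = (1-z)E(v) \in (1-z)M$; the goal is $a = 0$. Since $(1-z)v \cdot x = \theta(x)(1-z)v$ for $x \in M$, applying $E$ produces the intertwining identity $ax = \theta(x)a$ for all $x \in M$. The key computation will be that $v^*\theta(x) = xv^*$ (using $q \in \Z(M)$ commuting with $x$), so $(v^*a)x = v^*\theta(x)a = x(v^*a)$ and thus $v^*a \in M' \cap N = \Z(M)$; writing $c := v^*a \in \Z(M)$, one has $a = vv^*a = vc$. Let $e \in \Z(M)$ be the support projection of $c$, and take the polar decomposition $c = u|c|$ in $\Z(M)$ with $u \in \Z(M)$ a central partial isometry of initial and final projection $e$. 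The polar decomposition $a = w|a|$ in $M$ satisfies $|a|^2 = a^*a = c^*c = |c|^2$, so $|a| = |c|$, and comparing $w|c| = a = vu|c|$ on the support of $|c|$ yields $w = vu$; hence $vu \in M$, and right-multiplication by $u^*$ gives $ve \in M$. Since $ve$ is a partial isometry in $M$ with central initial projection $e$ and central final projection $\theta(e)$, these must coincide, so $\theta(e) = e$ and thus $ev = ve \in M$. Maximality of $z$ now forces $e \leq z$, so $c \in \Z(M)z$, and $a = vc = (vz)c = (zv)c \in zM$. Combining $a \in zM$ with $a \in (1-z)M$ gives $a = 0$.

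Part (ii) is then immediate: by (i) we have $E_j(v) = E_j(zv) + E_j((1-z)v) = zv$ for $j = 1,2$ and every $v \in A \cap \gnmn$, so the two expectations agree on a set whose norm-closed linear span is $A$, and hence everywhere. For (iii), $\theta^{-1} \circ E \circ \theta$ is again a conditional expectation from $A$ onto $M$ (using $\theta(M) = M$ and the fact that $\theta$ is an isometric $*$-isomorphism fixing $M$ as a set), so by the uniqueness in (ii) it equals $E$.

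The main obstacle will be the argument in (i): handling the polar decompositions of $a$ in $M$ and of $c$ in the abelian algebra $\Z(M)$ simultaneously so as to extract the equality $w = vu$, and then recognizing that the resulting central partial isometry $ve \in M$ must have equal central initial and final projections. This last observation is what delivers the crucial identity $\theta(e) = e$ needed to invoke maximality of $z$; everything else is bookkeeping with central projections.
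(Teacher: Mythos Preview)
Your overall strategy matches the paper's: take $z$ maximal with $zv\in M$, set $a=E((1-z)v)$, derive the intertwining relation $ax=\theta(x)a$, pass to $\cl{Z}(M)$ via $v^*a$, and use a polar decomposition to produce a central projection below which $v$ already lies in $M$. Parts (ii) and (iii) are handled exactly as in the paper.

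There is, however, a genuine error in your treatment of $z$. You claim that for $z'\in\mathcal{F}$ the partial isometry $z'v\in M$ has initial projection $z'q$; in fact $(z'v)^*(z'v)=v^*z'v$, and since $v\notin M$ there is no reason for $v^*z'v$ to equal $z'q$. The subsequent reduction to $z'\leq pq$ and the assertion that $z'v$ is a unitary in $Mz'$ implementing $\theta|_{Mz'}$ are therefore unjustified, and the conclusion $\theta(z)=z$ is false in general (for instance when $p$ and $q$ are orthogonal one has $z\geq 1-p$ while $\theta(z)\leq p$). You invoke $vz=zv$ only at the very end, in the line $a=vc=(vz)c=(zv)c\in zM$, and this step does not follow.

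The repair is immediate using what you have already correctly established. Your argument that $\theta(e)=e$ is valid: $ve\in M$ has initial projection $(ve)^*(ve)=eq=e$ (since $c=qc$ forces $e\leq q$) and final projection $vev^*=\theta(e)\in\cl{Z}(M)$, and equivalent central projections coincide. Hence $ev=\theta(e)v=ve\in M$, so $e\in\mathcal{F}$ and $e\leq z$. Now simply write
\[
a=vc=v(ec)=(ve)c=(ev)c=(ze)(vc)\in zM,
\]
which together with $a\in(1-z)M$ gives $a=0$. This replaces the faulty appeal to $vz=zv$, and with this change your proof is complete and essentially parallel to the paper's (the paper works with the polar decomposition of $a$ directly rather than passing through $c=v^*a$, but the mechanism is the same).
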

\begin{proof}
Let $v\in A\cap \gnmn$. By Zorn's lemma, there exists a
 central projection $z\in \mathcal{Z}(M)$  that is maximal with respect to the requirement that
$zv\in M$. Now let $E:A\to M$ be a conditional expectation,
let $w=(1-z)v\in A \cap \gnmn$, and put $y:=E(w)\in M$. To
arrive at a contradiction, suppose that $y\ne 0$.

Define a $*$-homomorphism $\phi :M\to M$ by
\begin{equation}
\phi(x)=wxw^*,\ \ \ x\in M.
\end{equation}
Since $z_1:=w^*w$ and $z_2:=ww^*$ are projections in $\cl{Z}(M)$, we see that $\phi$ restricts to a $*$-isomorphism of $Mz_1$ onto $Mz_2$. We have 
\begin{equation}\label{Conlem4eq1}
\phi(x)w=wxw^*w=wxz_1=wz_1x=wx,\ \ \ x\in M,
\end{equation}
so an application of $E$ gives
\begin{equation}\label{Conlem4eq2}
yx=\phi(x)y,
 \ \ \  x\in M.
\end{equation}
Thus $y^*y\in \cl{Z}(M)$. Moreover, since  $y^*=E(w^*)=E(w^*z_2)=y^*z_2$, we obtain  $yy^*\in (Mz_2)'\cap Mz_2=\cl{Z}(M)z_2$ from \eqref{Conlem4eq2}. Similarly,  $yz_1=y$, so the central support projection $p\in \cl{Z}(M)$ of $y$, and thus also of $y^*$, satisfies $p\leq z_1\wedge z_2$. Write $y=u|y|$ for the polar decomposition of $y\in pMp$ and note that $p$ is both the domain and range projection of $u$ so that $uu^*=u^*u=p$. From \eqref{Conlem4eq2},
\begin{equation}\label{Conlem4eq3}
ux|y|=u|y|x=yx=\phi(x)y=\phi(x)u|y|,\ \ \ x\in M,
\end{equation}
since $|y|\in \cl{Z}(M)$. If we define
\begin{equation}\label{Conlem4eq4}
J=\{t\in \cl{Z}(M)p:uxt=\phi(x)ut,\ x\in M\},
\end{equation}
then $J$ is a $w^*$-closed ideal in $\cl{Z}(M)p$ that contains $|y|$ and thus also its central support $p$. It follows that
\begin{equation}\label{Conlem4eq5}
upx=uxp=\phi(x)up,\ \ \ x\in M,
\end{equation}
and consequently that 
\begin{equation}\label{Conlem4eq6}
ux=\phi(x)u,\ \ \ x\in M.
\end{equation}
since $up=u$. Then, using \eqref{Conlem4eq6} and the adjoint of \eqref{Conlem4eq2},
\begin{equation}
w^*ux=w^*\phi(x)u=xw^*u,\ \ \ x\in M,
\end{equation}
so $w^*u\in M'\cap N=\cl{Z}(M)$. Multiply on the right by $u^*\in M$ to obtain $w^*p\in M$, and so $p(1-z)v=pw\in M$. Since 
\begin{equation}\label{Conlem4eq7}
pw=E(pw)=py=y\ne 0,
\end{equation}
we see that $p(1-z)\ne 0$ and $(z+p(1-z))v\in M$.
This  contradicts the maximality of $z$, so $E((1-z)v)=0$, proving part (i).

If $E_1,E_2:A \to M$ are conditional expectations then the first part  shows that they  agree on $A\cap \gnmn$ which 
spans a norm dense subspace of $A$. Thus $E_1=E_2$, proving part (ii).

Since $E$ and $\theta^{-1}\circ E\circ \theta$ are conditional expectations of $A$ onto $M$, equality follows from part (ii), establishing the last part of the lemma.
\end{proof}

\begin{lem}\label{ideal}
Let $M\subseteq N$ be an inclusion of von Neumann algebras with $M'\cap N=\mathcal{Z}(M)$ and let $E:N\to M$ be a faithful conditional expectation. Let $A$ be a
C$^*$-subalgbra of $N$ generated by $M$ and a subset of $\gnmn$. If $J$ is a norm closed nonzero ideal in $A$, then $M\cap J\ne\{0\}$.
\end{lem}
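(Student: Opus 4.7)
The plan is to exhibit a nonzero projection $e\in M$ lying in $J$, using the following mechanism: if one can find a self-adjoint $a\in J$ and a projection $e\in M$ with $eae\geq\delta e$ in $eAe$ for some $\delta>0$, then $eae$ is invertible in $eAe$ with inverse in $A$, and hence $e=(eae)^{-1}(eae)\in A\cdot J\subseteq J$. To set up such a bound, I would start with a nonzero positive $a\in J$. Faithfulness of $E$ makes $E(a)\in M^+$ nonzero, so a suitable spectral projection $q\in M$ of $E(a)$ satisfies $qE(a)\geq\delta q$ for some $\delta>0$. Replacing $a$ by $qaq\in J$, I may assume $E(a)\geq\delta q$ from the outset.

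Next I would approximate $a$ by a finite sum from the dense $*$-algebra generating $A$. Since $vm=(vmv^*)v$ with $vmv^*\in M$ for each central groupoid normalizer $v$, the $*$-algebra generated by $M$ and the given subset of $\gnmn$ is the linear span of products $mv$ with $m\in M$ and $v\in A\cap\gnmn$, so for any $\varepsilon>0$ there is a finite sum $b=\sum_{i=1}^{k} m_i v_i$ with $\|a-b\|<\varepsilon$. Lemma \ref{Conlem4}(i) decomposes each $v_i=z_iv_i+(1-z_i)v_i$ with $z_iv_i\in M$ and $E((1-z_i)v_i)=0$; absorbing the $M$-pieces $m_iz_iv_i$ into a single $m_0'\in M$, $b$ becomes $m_0'+\sum_{i=1}^{k} m_iw_i$ with $w_i\in A\cap\gnmn$, $E(w_i)=0$, and $\|E(a)-m_0'\|<\varepsilon$ (applying the contractive map $E$). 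Lemma \ref{Conlem3}(ii) then supplies a decreasing sequence of nonzero projections $q\geq e_1\geq e_2\geq\cdots$ in $M$ with $\|e_n m_i w_i e_n\|\to 0$ for each $i$.

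Now the arithmetic: $e_n\leq q$ and $E(a)\geq\delta q$ force $e_n E(a) e_n\geq\delta e_n$, and $\|E(a)-m_0'\|<\varepsilon$ then gives $e_n m_0' e_n\geq(\delta-\varepsilon)e_n$. For $n$ sufficiently large $\|e_n a e_n-e_n m_0' e_n\|<2\varepsilon$, and self-adjointness of both sides yields $e_n a e_n\geq(\delta-3\varepsilon)e_n$ in $e_n A e_n$. Taking $\varepsilon<\delta/3$, the element $e_n a e_n$ is invertible in the C$^*$-algebra $e_n A e_n$, whose unit is $e_n$; consequently $e_n=(e_n a e_n)^{-1}(e_n a e_n)\in A\cdot J\subseteq J$, and the nonzero projection $e_n\in M$ lies in $M\cap J$.

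The substantive obstacle is compatibility: compression by $e_n$ must preserve the positive lower bound inherited from $E(a)$ and simultaneously extinguish the off-$M$ remainder of the approximation. Lemma \ref{Conlem4}(i), which cleanly sorts each normalizer into an $M$-piece plus an $E$-annihilated piece, is what makes these two requirements achievable at once; Lemma \ref{Conlem3}(ii) then delivers the shrinking projections that execute the kill-and-preserve step, and the bound-from-below trick converts the resulting approximate identity into an honest element of $M\cap J$.
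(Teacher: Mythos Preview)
Your proof is correct and follows essentially the same route as the paper's: approximate a positive element of $J$ by a finite sum from the generating $*$-algebra, use Lemma~\ref{Conlem4}(i) to split off the $M$-part, apply Lemma~\ref{Conlem3}(ii) to compress away the $E$-null remainder, and conclude that a nonzero projection of $M$ lies in $J$. The only differences are cosmetic. First, the paper arranges that $E(xj)$ is \emph{exactly} a projection $g$ (by premultiplying $j$ by a suitable $x\in M$), whereas you work with a spectral lower bound $E(a)\geq\delta q$; either device serves equally well. Second, for the endgame the paper passes to the quotient $\pi:A\to A/J$ and argues that $\|\pi(e_n)\|<1$ forces $\pi(e_n)=0$, while you observe that $e_nae_n\geq c\,e_n$ is invertible in $e_nAe_n$ and hence $e_n\in J$; these are equivalent standard tricks. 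One small wrinkle: your intermediate term $e_nm_0'e_n$ need not be self-adjoint since $m_0'$ is just a piece of an approximant to $a$, so the inequality $e_nm_0'e_n\geq(\delta-\varepsilon)e_n$ should be read after taking real parts (which costs nothing in the estimates, since $E(a)$ and $e_nae_n$ are self-adjoint).
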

\begin{proof} Let $\pi :A\to A/J$ be the quotient map and let $j$ be a positive nonzero element of $J$. By the faithfulness of $E$, $E(j)\in M$ is a nonzero positive
element so we may multiply $j$ on the left by a suitable element $x\in M$ to obtain that $E(xj)$ is a nonzero projection $g\in M$. Since $\gnmn$ is closed under adjoints
and multiplication, $A$ has a norm dense $*$-subalgebra $B$ whose elements have the form
\begin{equation}\label{Bform}
x_0+\sum_{i=1}^k x_iv_i
\end{equation}
where $x_i\in M$ for $0\leq i \leq k$ and $v_i \in \gnmn$ for $1\leq i\leq k$. By Lemma \ref{Conlem4}, there exist central projections $z_i\in \mathcal{Z}(M)$ so that
$z_iv_i\in M$ and $E((1-z_i)v_i)=0$. Writing $x_iv_i=x_iz_iv_i+x_i(1-z_i)v_i$ 
and combining the $x_iz_iv_i$ terms with $x_0$ allow us to assume that we also have $E(v_i)=0$ in \eqref{Bform}. We may approximate $xj$ by
elements of $B$, so we may choose $x_i\in M$ for $0\leq i\leq k$ and $v_i\in A \cap \gnmn$ for $1\leq i \leq k$ with $E(v_i)=0$ so that
\begin{equation}
\| xj-x_0-\sum_{i=1}^k x_iv_i\|<\frac{1}{3}.
\end{equation}
Apply $E$ to this inequality to obtain $\| g-x_0\|<1/3$, so
\begin{equation}\label{twothirds}
\| xj-g-\sum_{i=1}^k x_iv_i\|<\frac{2}{3}.
\end{equation}
By Lemma \ref{Conlem3}, there are nonzero projections $g\geq e_1\geq e_2 \ldots $ in $M$ such that
\begin{equation}
\lim_{n\to\infty} \| e_nx_iv_ie_n\|=0,\ \ \ 1\leq i\leq k,
\end{equation}
so there is a sufficiently large choice of $n$ so that
$\|\sum_{i=1}^ke_nx_iv_ie_n\|<1/3$. Pre- and post-multiply by $e_n$ in \eqref{twothirds} to obtain the inequality
\begin{equation}
\| e_nxje_n -e_n\|<1.
\end{equation}
Since $e_nxje_n\in J$, we may apply the quotient map $\pi$ to obtain $\| \pi(e_n)\|<1$. However, $\pi(e_n)$ is a projection so 
$\pi(e_n)=0$ and $e_n\in J$. Thus $M\cap J\ne \{0\}$, as
required.
\end{proof}

\begin{cor}\label{autocont}
Let $M\subseteq N$ be an inclusion of von Neumann algebras with $M'\cap N=\cl{Z}(M)$ and a faithful conditional expectation $E_M:N\to M$. Let $A$ be a C$^*$-algebra satisfying $M\subseteq A \subseteq N$ and suppose that $A$ is the norm closed span of $A\cap \gnmn$. If $\theta $ is a $*$-automorphism of $A$ such that $\theta(M)=M$, then $\theta$ is $B$-continuous.
\end{cor}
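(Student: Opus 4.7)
The plan is to mimic the argument used for Lemma \ref{Bcont}(i), with Lemma \ref{Conlem4}(iii) supplying the crucial commutation relation between $\theta$ and the conditional expectation. The hypothesis that $A$ is the norm closed span of $A\cap \gnmn$ is exactly what is needed to invoke part (iii) of Lemma \ref{Conlem4}, so I would begin there.

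First, apply Lemma \ref{Conlem4}(iii) to the conditional expectation $E:=E_M|_A:A\to M$ (which is faithful and well defined since $E_M(N)=M\supseteq M$) and the automorphism $\theta$. This yields
\begin{equation}
E_M|_A = \theta^{-1}\circ E_M|_A \circ \theta,
\end{equation}
equivalently $E_M\circ \theta = \theta\circ E_M$ on $A$.

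Next, let $\{x_\lambda\}_{\lambda\in\Lambda}$ be a net in $A$ that is $B$-convergent to $0$, and let $\omega$ be a normal state on $M$. Using the identity above applied to the positive element $x_\lambda^* x_\lambda\in A$, together with $\theta(x_\lambda^*x_\lambda)=\theta(x_\lambda)^*\theta(x_\lambda)$, compute
\begin{equation}
\omega\bigl(E_M(\theta(x_\lambda)^*\theta(x_\lambda))\bigr)=\omega\bigl(\theta(E_M(x_\lambda^* x_\lambda))\bigr)=(\omega\circ\theta|_M)\bigl(E_M(x_\lambda^* x_\lambda)\bigr).
\end{equation}
Since $\theta(M)=M$, the restriction $\theta|_M$ is a $*$-automorphism of the von Neumann algebra $M$, and every such automorphism is normal (it preserves the order structure, so it preserves suprema of increasing bounded nets of positive elements). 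Consequently $\omega\circ\theta|_M$ is again a normal state on $M$, and the $B$-convergence of $\{x_\lambda\}$ to $0$ forces
\begin{equation}
\lim_\lambda \omega\bigl(E_M(\theta(x_\lambda)^*\theta(x_\lambda))\bigr)=0.
\end{equation}
As $\omega$ was an arbitrary normal state on $M$, this shows $\theta(x_\lambda)\to 0$ in the $B$-topology, establishing $B$-continuity of $\theta$.

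The substance of the argument is entirely front-loaded into Lemma \ref{Conlem4}(iii), whose proof depended on the uniqueness of conditional expectations onto $M$ in the presence of sufficiently many central groupoid normalizers. Once that identity is in hand, the present corollary is a routine reprise of the computation in Lemma \ref{Bcont}(i), so I do not expect any genuine obstacle beyond verifying that the normal state $\omega\circ\theta|_M$ is available, which follows from the automatic normality of $*$-automorphisms of von Neumann algebras.
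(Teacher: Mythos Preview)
Your proposal is correct and follows essentially the same route as the paper: invoke Lemma~\ref{Conlem4}(iii) to obtain $\theta^{-1}\circ E\circ\theta=E$ on $A$, then reprise the seminorm computation from Lemma~\ref{Bcont}(i), using that $\omega\circ\theta|_M$ is again a normal state on $M$. The paper's argument is line-for-line the same, with only cosmetic differences in how the identity is inserted into the chain of equalities.
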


\begin{proof}
Let $E:A\to M$ be the restriction of $E_M$ to $A$. Consider a net $\{x_\lambda\}_{\lambda\in \Lambda}$ in $ A$ such that $x_\lambda \to 0$ in the $B$-topology. If $\omega\in M_*$ is a fixed but arbitrary normal state on $M$, then
\begin{align}
\omega\circ E(\theta(x_\lambda)^*\theta(x_\lambda))&=\omega\circ E(\theta(x_\lambda^*x_\lambda))
=\omega\circ \theta\circ\theta^{-1}\circ E \circ \theta(x_\lambda^* x_\lambda)
=\omega\circ\theta\circ E(x_\lambda^* x_\lambda)
\end{align}
using Lemma \ref{Conlem4} (iii) for the last equality. Since the restriction of $\omega \circ \theta$ to $M$ is a normal state, it follows that
\begin{equation}
\lim_\lambda \omega\circ E(\theta(x_\lambda)^*\theta(x_\lambda))=\lim_\lambda (\omega\circ \theta)(E(x_\lambda^* x_\lambda))=0.
\end{equation}
Thus $\lim_\lambda \theta(x_\lambda)=0$ in the $B$-topology, showing that $\theta$ is $B$-continuous.
\end{proof}

\section{Norming of crossed products}\label{normingsect}

Recall from \cite{PSS} that if we have an inclusion $B\subseteq A$ of C$^*$-algebras, then $B$ is said to norm $A$ when the following is satisfied: for any integer $k$ and
matrix $X\in \mathbb{M}_k(A)$,
\begin{equation}
\|X\|=\sup \|RXC\|
\end{equation}
where the supremum is taken over row matrices $R$ and column matrices $C$ of length $k$ with entries from $B$ and satisfying $\|C\|,\|R\|\leq 1$. Since $RXC\in A$, the
point of this definition is to reduce the calculation of norms in $\mathbb{M}_k(A)$ to that of norms in $A$, and the concept of norming has proved useful in showing
complete boundedness of certain types of bounded maps; we will use it in this way to prove Lemma \ref{autocb}. We also recall that a C$^*$-algebra $A$ acting on a Hilbert space $\Hil$ is said to be locally cyclic if, given $\vp>0$ and vectors $\xi_1,\ldots,\xi_n\in \Hil$, there exist $\xi \in \Hil$ and $a_1,\ldots,a_n\in A$ such that $\|a_i\xi-\xi_i\|<\vp$ for $1\leq i\leq n$. When $\Hil$ is separable, this coincides with having a cyclic vector but is more general when $\Hil$ has higher cardinality.

We will need one result in this direction. As we point out in the proof, it is already known for all but one of the various types of von Neumann algebras, so our
objective is to put the previous cases under one general umbrella. The hypothesis of a properly outer action seems necessary. If $G$ acts trivially on $M=\mathbb{C}1$, then the crossed product is $L(G)$ which is not normed by $\mathbb{C}1$ unless $G$ is abelian.

\begin{thm}\label{norming}
Let $G$ be a discrete group acting on a von Neumann algebra $M$ by properly outer $*$-automorphisms. Then $M$ norms $\mxg$.
\end{thm}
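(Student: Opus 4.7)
The plan is to invoke the criterion from \cite{PSS} that $M$ norms a containing $C^*$-algebra $A$ whenever $A$ admits a faithful representation on some Hilbert space $\Hil$ in which $M$ acts locally cyclically. I would construct such a representation of $\mxg$ and verify the local cyclicity via the noncommutative Rokhlin machinery from Section \ref{groupoidsect}.

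First I would realize $\mxg$ on the standard Hilbert space of the dual weight construction. Taking the faithful normal semifinite weight $\phi$ on $M$ of Theorem \ref{exp} (assembled from faithful normal states $\phi_\lambda$ on the countably decomposable central pieces $p_\lambda M p_\lambda$), the dual weight $\tilde\phi = \phi \circ E_M$ is faithful normal semifinite on $\mxg$, and by \cite[Theorem X.1.17]{Tak2} its GNS Hilbert space is naturally $\Hil = L^2(M,\phi) \otimes \ell^2(G)$, with $M$ embedded via the twisted diagonal action $(m\xi)(g) = \alpha_g^{-1}(m)\xi(g)$ and each $u_h$ acting by the appropriate combination of $\alpha_h$ on the first factor and group translation on the second.

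Next I would verify local cyclicity of $M$ on $\Hil$. Given finite $\xi_1,\ldots,\xi_n \in \Hil$ and $\vp>0$, I would first approximate each $\xi_i$ by a vector supported on a common finite set $F \subseteq G$ with each coordinate lying in $p_\lambda L^2(M,\phi)$ for some fixed countably decomposable central projection $p_\lambda$; on this piece $M$ admits a cyclic vector. For $g_1 \neq g_2$ in $F$, the automorphism $\alpha_{g_2^{-1}g_1}$ is properly outer, so applying Connes's noncommutative Rokhlin theorem \cite[Theorem 1.2.1]{Con} (extended beyond countable decomposability as noted in the discussion after Lemma \ref{Conlem2}), I would produce a projection $q \in M$ such that $\{\alpha_g(q) : g \in F\}$ are nearly pairwise orthogonal. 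Using these orthogonal Rokhlin corners, I would form $\eta \in \Hil$ supported on $F$ (with coordinates built from $q\xi_{\phi_\lambda}$) and choose $m_i \in \sum_{g \in F} \alpha_g(q) M \alpha_g(q)$ whose twisted diagonal action approximates $\xi_i$ coordinate by coordinate: the matching coordinate is picked up by the corresponding corner, while the Rokhlin orthogonality makes all other coordinates near-zero.

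Combining these pieces yields local cyclicity of $M$ on $\Hil$, and the PSS norming criterion then gives that $M$ norms $\mxg$. The main obstacle is handling uncountably decomposable $M$ and uncountable $G$ simultaneously: Connes's theorem is stated under countable decomposability, and a single Rokhlin projection is in tension with the large support needed to approximate arbitrary target vectors. The reduction to a finite set $F$ and to the countably decomposable central pieces $p_\lambda$ of Theorem \ref{exp} circumvents this. The ``one case'' the authors indicate as previously unhandled is presumably exactly this general non-factor, non-countably-decomposable setting, while the factor, finite, and properly infinite cases already in the literature all fit within the same unified Rokhlin-plus-central-decomposition framework.
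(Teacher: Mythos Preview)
Your proposal has a genuine gap: $M$ is \emph{not} locally cyclic on $\Hil = L^2(M,\phi)\otimes\ell^2(G)$ in general, so the PSS criterion cannot be applied to $M$ alone. A dimension count already kills the idea in the finite case. Take $M=\mathbb{M}_2(\mathbb{C})$ and $|G|=2$ acting by a properly outer automorphism; then $\Hil\cong\mathbb{C}^8$ and $M$ acts with multiplicity four, so it has no cyclic vector, hence (being finite dimensional) is not locally cyclic. Your Rokhlin construction cannot repair this: if $q$ is chosen so that $\{\alpha_g(q):g\in F\}$ are nearly orthogonal, then $q$ is forced to be small, and any vector $\eta$ built from $q\xi_{\phi_\lambda}$ has $g$-coordinate essentially confined to $\alpha_g^{-1}(q)L^2(M,\phi)$. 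Thus $M\eta$ can only approximate targets whose $g$-coordinates live in these small corners, not arbitrary vectors $\xi_i(g)\in L^2(M,\phi)$.

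The paper proceeds quite differently. It first decomposes $M$ by type and observes that the cases I$_1$, I$_\infty$, II$_1$, II$_\infty$, III are already in the literature, so the only new case is type I$_n$ with $2\le n<\infty$ (this is the ``one case'' you were guessing at, not the general non-countably-decomposable setting). Writing $M=\mathbb{M}_n\otimes A$ with $A$ abelian, one checks that the restricted action $\beta$ on the center $A$ is again properly outer, so $A$ is Cartan in $A\rtimes_\beta G$. The key point is that local cyclicity is obtained not for $M$ but for $\{M\cup B\}''$ with $B=J_1AJ_1$ an abelian subalgebra of the \emph{commutant} $(\mxg)'$; this is exactly what the version of \cite[Theorem~2.7]{PSS} used here allows. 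The reductions to countable $G$ and countably decomposable $A$ are then handled by passing to finitely generated subgroups $G_F$ and to $G$-invariant countably decomposable central summands.
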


\begin{proof}
The central projections that decompose $M$ into its components of various types are all invariant under the $\alpha_g$'s, so we may reduce to the case where $M$ is type
I$_1$, I$_\infty$, II$_1$, II$_\infty$, III, or I$_n$ for $2\leq n<\infty$. In the first five cases the result is already known; see \cite[Corollary 1.4.9]{CPZ} for type I$_1$,
\cite[Theorem 2.9]{PSS} for types I$_\infty$, II$_\infty$ and III, and \cite[Remark 2.5(iii)]{PS} for type II$_1$ where the argument given is for factors but applies generally without change. Thus
it suffices to consider a type I$_n$ algebra $\mathbb{M}_n\otimes A$ where $A$ is an abelian von Neumann algebra. We work first with the special case where $G$ is
countable and $A$ is countably decomposable. Under these assumptions, we can fix a faithful normal state $\phi$ on $A$ \cite[Prop. II.3.19]{Tak1}, and we let $\tau$ denote the unital
trace on $\mathbb{M}_n$ whereupon $\tau\otimes \phi$ is a  faithful normal state on $M$.

The center of $M$ is $A$, so the $*$-automorphisms $\alpha_g$ leave $A$ invariant. If $\beta_g$ denotes the restriction to $A$ of $\alpha_g$, then we have a second
crossed product $A\rtimes_\beta G\subseteq \mxg$. Suppose that there exists $g\in G\setminus\{e\}$ so that $\beta_g$ is not properly outer on $A$. Then there exists a
nonzero projection $p\in A$ so that $\beta_g|_{Ap}=\mathrm{id}$. It follows that $\alpha_g$ leaves $\mathbb{M}_n\otimes Ap$ invariant and fixes each element of the
center $Ap$. By \cite[Corollary 9.3.5]{KR2} $\alpha_g$ is inner on $\mathbb{M}_n\otimes Ap$, contradicting the proper outerness of $\alpha_g$ on $M$. Thus each $\beta_g$
is properly outer for $g\in G\setminus \{e\}$, and so $A$ is a Cartan masa in $A\rtimes_\beta G$.

Let $\psi$ be the faithful normal state on $\mxg$ defined by $\psi=(\tau \otimes\phi)\circ E_M$. The Hilbert space $\Hil_0$ in the standard representation of
$A\rtimes_\beta G$ is the completion of $A\rtimes_\beta G$ with the inner product
\begin{equation}
\langle x,y\rangle=\phi(y^*x),\ \ \ x,y\in A\rtimes_\beta G.
\end{equation}
In the development of modular theory \cite[{\S}9.2]{KR2} (see also \cite{Str}) there is a preclosed conjugate linear map $S_0:A\rtimes_\beta G\to \Hil_0$ defined by
\begin{equation}
S_0(x)=x^*,\ \ \ x\in A\rtimes_\beta G,
\end{equation}
and its closure is denoted $\ovl{S_0}$. The polar decomposition is $\ovl{S_0}=J_0(\ovl{S_0}^*\,\ovl{S_0})^{1/2}$ where $J_0:\Hil_0\to \Hil_0$ is a surjective conjugate linear
isometry. If we carry out the same construction for $\mxg$ using the state $\psi$, then we obtain a corresponding Hilbert space $\Hil_1$ and operators $S_1$ and $J_1$. We
may identify $\Hil_0$ with the subspace of $\Hil_1$ defined by the operators in $(I_n\otimes A)\rtimes_\beta G$, and since $S_1|_{\Hil_0}=S_0$, we see that $J_1|_{\Hil_0}=J_0$.
It follows that $J_1(A\rtimes_\beta G)J_1$ restricted to $\Hil_0$ is $J_0(A\rtimes_\alpha G)J_0$.

As noted above, $A$ is a Cartan masa in $A\rtimes_\beta G$ so by \cite[Theorem 1.4.7]{CPZ} (see also \cite[Proposition 2.9 (1)]{FM2} for the separable predual case) $\{A\cup J_0AJ_0\}''$ is a masa in $\cl{B}(\Hil_0)$ so is locally cyclic on $\Hil_0$ \cite[Theorem 2.7]{PSS}. Since $I_n$ is a cyclic vector for
$\mathbb{M}_n$ in its standard representation from $\tau$, it follows that $\{M\cup J_1AJ_1\}''=\{(\mathbb{M}_n\otimes A)\cup J_1AJ_1\}''$ is locally cyclic on $\Hil_1$.
Thus we have found an abelian von Neumann algebra $B:=J_1AJ_1\subseteq (\mxg)'$ so that $\{M \cup B\}''$ is locally cyclic, and so $M$ norms $\mxg$ by \cite[Theorem 2.7]{PSS}.

For the general case, let $G_F$ be the countable subgroup of $G$ generated by a finite subset $F$ of $G$ and let $E_F:\mxg\to \fmxg$ denote the normal conditional
expectation. If we view the collection $\cl F$ of finite subsets as an upwardly directed net, then $\{(I_k\otimes E_F)(X)\}$ is a uniformly bounded net converging in the
$B$-topology  to $X$ for $X\in \mathbb{M}_k\otimes(\mxg)$ and $k$ arbitrary. Moreover, by Lemma \ref{Bconv}, convergence also takes place in the $w^*$-topology. Then $\limsup_F\|(I_k\otimes E_F)(X)\|=\|X\|$, so a straightforward
calculation shows that if $M$ norms each $\fmxg$, then it also norms $\mxg$. Thus it suffices to assume that $G$ is countable.

If $p\in A$ is a countably decomposable projection then so is $q:=\vee\{\alpha_g(p):g\in G\}$, since $G$ is countable, and $q$ is $G$-invariant by construction. A
maximality argument then gives a family $\{q_\lambda\}_{\lambda\in \Lambda}$ of orthogonal $G$-invariant countably decomposable projections in $A$ that sum to 1. These
decompose $\mxg$ as $\oplus_{\lambda\in \Lambda}(Mq_\lambda \rtimes_\alpha G)$ and the special case above applies to each summand. Thus $M$ norms $\mxg$ as required.
\end{proof}

\section{Mercer's  extension theorem}\label{Mercersect}
 In this section we consider an inclusion $M\subseteq X \subseteq \mxg$ where $X$ is a $w^*$-closed $M$-bimodule. Subsequently we will use the notation $W^*(\cdot)$ to denote the von Neumann algebra generated by a set of operators.
Our objective is Theorem \ref{liftX0}, where we show that suitable $M$-bimodule maps on $X$ extend to $*$-automorphisms of $W^*(X)$, a version of Mercer's  extension theorem \cite{Mer} in the setting of crossed products.  We will need two auxiliary sets defined as follows:
\begin{equation}\label{X0def}
X_0=
{\overline{{\mathrm{span}}_M}}^{\|\cdot\|}\{X\cap \gncp\},
\end{equation}
the norm closed $M$-linear span of $X\cap \gncp$, and
\begin{equation}\label{Ydef}
Y=\{E_M(xg^{-1})g:x\in X,\ g\in G\},
\end{equation}
the set of elements $mg$ that appear in the Fourier series of some $x\in X$. It follows from Lemma \ref{Mzg} that 
\begin{equation}\label{YX0X}
Y\subseteq X_0\subseteq X.
\end{equation}

\begin{lem}\label{generate}
Let a discrete group $G$ act on a von Neumann algebra $M$ by properly outer $*$-automorphisms and let $X$ be a $w^*$-closed $M$-bimodule satisfying $M\subseteq X\subseteq \mxg$. Then
\begin{equation}
W^*(Y)=W^*(X_0)=W^*(X),
\end{equation}
where $X_0$ and $Y$ are as defined in \eqref{X0def} and \eqref{Ydef}.

In particular, if $X$ generates $\mxg$ as a von Neumann algebra, then this crossed product is also generated by $X_0$ and by $Y$.
\end{lem}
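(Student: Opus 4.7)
The containments $Y \subseteq X_0 \subseteq X$ from \eqref{YX0X} immediately yield $W^*(Y) \subseteq W^*(X_0) \subseteq W^*(X)$, so the real content is the reverse inclusion $W^*(X) \subseteq W^*(Y)$. My plan is to prove this by showing that every $x \in X$ already lies in $N := W^*(Y)$, via a comparison of the bimodule structure of $X$ given by Lemma \ref{Mzg} with the structure of $N$ as an intermediate von Neumann algebra given by Theorem \ref{Struct}.

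Since $M \subseteq Y$ (take $x = m \in M \subseteq X$ and $g = e$ in the definition of $Y$), one has $M \subseteq N \subseteq \mxg$, so Theorem \ref{Struct} supplies central projections $\{w_g\}_{g \in G}$ with $N = \ovl{\mathrm{span}}^{w^*}\{Mw_g g : g \in G\}$ and a faithful normal conditional expectation $E_N:\mxg \to N$ whose action on a Fourier series is $E_N(\sum_g x_g g) = \sum_g x_g w_g g$ with $B$-convergence. In parallel, Lemma \ref{Mzg} produces central projections $\{z_g\}$ attached to $X$, for which $Mz_g = \{m \in M : mg \in X\}$ and every Fourier coefficient $x_g$ of an element $x \in X$ lies in $Mz_g$.

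The decisive step is to link the two systems by proving $z_g \leq w_g$ for every $g$. A direct calculation using the centrality of $z_g$ shows that $z_g g$ is a partial isometry with $(z_g g)(z_g g)^* = z_g$ and $(z_g g)^*(z_g g) = \alpha_{g^{-1}}(z_g)$, both of which are central in $M$, and that $(z_g g)M(z_g g)^*,\,(z_g g)^*M(z_g g) \subseteq M$. Hence $z_g g \in X \cap \gncp$. Taking $x = z_g g$ in the definition of $Y$ gives $z_g g \in Y \subseteq N$, and Lemma \ref{Mzg} applied to $N$ then yields $z_g \leq w_g$.

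To conclude, let $x \in X$ have Fourier series $\sum_g x_g g$. Then $x_g \in Mz_g \subseteq Mw_g$, so $x_g w_g = x_g$, and therefore $E_N(x) = \sum_g x_g w_g g = \sum_g x_g g$ in the $B$-topology. Since Mercer's result also makes the Fourier series of $x$ itself $B$-converge to $x$, this identifies $E_N(x)$ with $x$ and places $x$ in $N$. Thus $X \subseteq N = W^*(Y)$, which closes the loop of inclusions and gives the displayed equality; the final assertion about $\mxg$ being generated by $X_0$ and by $Y$ is the special case $W^*(X) = \mxg$. I do not foresee a genuine obstacle here: the whole argument rests on the single observation that each $z_g g$ is already a central groupoid normalizer, after which Lemma \ref{Mzg} and Theorem \ref{Struct} slot together without further work.
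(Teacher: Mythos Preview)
Your proof is correct and shares the paper's key idea: each Fourier term $x_gg$ of an $x\in X$ already lies in $Y$, and $B$-convergence of the Fourier series then places $x$ inside the $B$-closed algebra $W^*(Y)$. The paper's version is a little more economical---it invokes only the $B$-closure of $W^*(Y)$ from Theorem~\ref{exp} rather than the full structure of Theorem~\ref{Struct} and the expectation $E_N$, and it omits the groupoid-normalizer verification for $z_gg$, which is in fact unnecessary even for your own argument (the membership $z_gg\in Y$ follows directly from $z_gg\in X$ and the definition of $Y$).
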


\begin{proof}
The inclusion  $W^*(Y)\subseteq W^*(X)$ is immediate from the containment \eqref{YX0X}.

Since $M\subseteq X$, \eqref{Ydef} gives that $M\subseteq Y$ and so $W^*(Y)$ is a von Neumann algebra lying between $M$ and $\mxg$. By Theorem \ref{exp}, $W^*(Y)$ is $B$-closed and so $\overline{{\mathrm{span}}}^B\{Y\}\subseteq W^*(Y)$. Consider an element $x=\sum_{g\in G}x_gg\in X$. Then $x_gg\in Y$ for each $g\in G$ from \eqref{Ydef}, and since the Fourier series converges in the $B$-topology, we conclude that $x\in \overline{{\mathrm{span}}}^B\{Y\}\subseteq W^*(Y)$. This shows that $W^*(X)\subseteq W^*(Y)$, establishing the reverse containment.
\end{proof}

Our aim now is to show that certain maps on $M$-bimodules $X$ extend to $*$-automorphisms of $W^*(X)$. To this end, we say that a map $\theta:X\to X$ is an $M$-bimodule map if the following two conditions are satisfied:
\begin{itemize}
\item[(i)] The restriction of $\theta$ to $M$ is a $*$-automorphism.
\item[(ii)] For $x\in X$ and $m_1,\,m_2\in M$,
\begin{equation}\label{mapdef}
\theta(m_1xm_2)=\theta(m_1)\theta(x)\theta(m_2).
\end{equation}
\end{itemize}

\begin{lem}\label{autocb}
Let a discrete group $G$ act on a von Neumann algebra $M$ by properly outer $*$-automorphisms and let $X$ be a $w^*$-closed $M$-bimodule satisfying $M\subseteq X\subseteq \mxg$. Let $\theta :X\to X$ be an isometric surjective $M$-bimodule map. Then $\theta$ is a complete isometry.
\end{lem}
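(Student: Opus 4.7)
The plan is a direct application of the norming theorem from the preceding section, following the standard paradigm that a bimodule map over a norming subalgebra is automatically completely isometric.

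First I would invoke Theorem \ref{norming} to conclude that $M$ norms $\mxg$. Since $X\subseteq \mxg$ and $X$ is an $M$-bimodule, this norming property transfers to matrices over $X$: for any $k$ and any $\Xi\in\mathbb{M}_k(X)\subseteq \mathbb{M}_k(\mxg)$, we have
\begin{equation}
\|\Xi\|=\sup\|R\Xi C\|,
\end{equation}
where the supremum ranges over contractive rows $R$ and columns $C$ of length $k$ with entries in $M$, and the key point is that $R\Xi C$ actually lies in $X$ because $X$ is an $M$-bimodule.

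Next, since $\theta|_M$ is a $*$-automorphism of $M$, it is automatically completely isometric and, being bijective on $M$, parametrizes all contractive rows and columns with entries in $M$: if $R'=\theta^{(1)}(R)$ and $C'=\theta^{(1)}(C)$, then $\|R'\|=\|R\|$ and $\|C'\|=\|C\|$, and every row/column with entries in $M$ arises this way. The bimodule property (ii), combined with linearity, gives
\begin{equation}
\theta(R\Xi C)=R'\,\theta^{(k)}(\Xi)\,C',
\end{equation}
for $\Xi\in\mathbb{M}_k(X)$. Using the norming property applied to $\theta^{(k)}(\Xi)\in \mathbb{M}_k(X)$, then the isometric property of $\theta$ on $X$ (applied to the element $R\Xi C\in X$), and finally the norming property again for $\Xi$:
\begin{align}
\|\theta^{(k)}(\Xi)\|&=\sup_{\|R'\|,\|C'\|\leq 1}\|R'\theta^{(k)}(\Xi)C'\|=\sup_{\|R\|,\|C\|\leq 1}\|\theta(R\Xi C)\|\\
&=\sup_{\|R\|,\|C\|\leq 1}\|R\Xi C\|=\|\Xi\|.
\end{align}

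This shows $\theta^{(k)}$ is an isometry for every $k$, i.e., $\theta$ is a complete isometry. There is no real obstacle here, since all the substantive work has been placed in Theorem \ref{norming}; the one point to double-check is that $R\Xi C$ remains in $X$ so that the isometry hypothesis on $\theta$ applies, which is exactly the bimodule property of $X$.
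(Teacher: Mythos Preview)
Your proof is correct and follows essentially the same approach as the paper: invoke Theorem~\ref{norming} to obtain that $M$ norms $\mxg$, and then run the standard argument showing that an isometric bimodule map over a norming subalgebra is completely isometric. The only difference is that the paper cites \cite{Pitts,PSS,Sm} for this last step, whereas you have written it out explicitly; your version is a faithful expansion of exactly that argument.
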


\begin{proof} By Theorem \ref{norming}, $M$ norms $\mxg$. The result now follows by using the argument of  \cite[Theorem 1.4]{Pitts}
(see also \cite[Theorem 2.10]{PSS} or \cite[Theorem 2.1]{Sm} for earlier versions of this type of result).
\end{proof}

\begin{lem}\label{thetaX0}
Let a discrete group $G$ act on a von Neumann algebra $M$ by properly outer $*$-automorphisms and let $X$ be a $w^*$-closed $M$-bimodule satisfying $M\subseteq X\subseteq \mxg$. Let $X_0$ be defined as in \eqref{X0def}, and let $\theta :X\to X$ be an isometric surjective $M$-bimodule map. Then $\theta $ maps $X_0$ onto $X_0$.
\end{lem}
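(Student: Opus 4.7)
The plan is to reduce to the single claim that $\theta$ maps $X\cap\gncp$ into itself. Once this is shown, since $\theta$ is an isometric $M$-bimodule map it preserves the $M$-linear span of $X\cap\gncp$ and its norm closure, so $\theta(X_0)\subseteq X_0$; running the same argument with $\theta^{-1}$, which is likewise an isometric surjective $M$-bimodule map because $\theta|_M=:\sigma$ is a $*$-automorphism of $M$, then yields equality.

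To prove the claim, fix $v\in X\cap\gncp$ with central source and range projections $p=v^*v$ and $q=vv^*$, and let $\phi(m)=vmv^*$, so the normalizing relation $vm=\phi(m)v$ holds for all $m\in M$. Set $w=\theta(v)$. Applying $\theta$ to $v=vp=qv$ and to $vm=\phi(m)v$, the $M$-bimodule property gives
\[
w=w\sigma(p)=\sigma(q)w,\qquad wn=\psi(n)w\ \text{for all }n\in M,
\]
where $\psi:=\sigma\circ\phi\circ\sigma^{-1}:M\sigma(p)\to M\sigma(q)$ is a $*$-isomorphism (extended by $0$). Taking adjoints gives $nw^*=w^*\psi(n)$, and combining I compute $w^*wn=w^*\psi(n)w=nw^*w$ for every $n\in M$. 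Hence $w^*w\in M'\cap\mxg=\mathcal{Z}(M)$ by proper outerness, and a parallel manipulation places $ww^*$ in $\mathcal{Z}(M)$ as well.

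The main obstacle will be upgrading the trivial support inequalities $w^*w\le\sigma(p)$ and $ww^*\le\sigma(q)$ to equalities, as $\|w\|=1$ alone is too weak. Here I plan to invoke the isometry of $\theta$ in full. The centrality of $p$ yields $\|vm\|^2=\|m^*pm\|=\|\sigma(p)\sigma(m^*m)\|$, while centrality of $w^*w$ gives $\|w\sigma(m)\|^2=\|(w^*w)\sigma(m^*m)\|$, so from $\theta(vm)=w\sigma(m)$ and isometry,
\[
\|(w^*w)\sigma(m^*m)\|=\|\sigma(p)\sigma(m^*m)\|\quad\text{for every }m\in M.
\]
As $\sigma(m^*m)$ ranges over all of $M^+$, and in particular over the characteristic projections of $\mathcal{Z}(M)\cong L^\infty(\Omega)$, a standard level-set comparison forces the two positive central elements to agree, so $w^*w=\sigma(p)$. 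The parallel computation starting from $mv\in X$ gives $ww^*=\sigma(q)$.

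Therefore $w$ is a partial isometry with central source $\sigma(p)$ and range $\sigma(q)$. The intertwining $wn=\psi(n)w$ yields $wnw^*=\psi(n)ww^*=\psi(n)\in M$, so $wMw^*\subseteq M$; the adjoint relation gives $w^*Mw\subseteq M$. Hence $w=\theta(v)\in X\cap\gncp$, completing the key step and the lemma.
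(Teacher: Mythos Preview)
Your argument is correct and follows the same overall architecture as the paper's proof: reduce to showing $\theta$ preserves $X\cap\gncp$, derive the intertwining relation $w n=\psi(n)w$, use it to place $w^*w$ and $ww^*$ in $\mathcal{Z}(M)$, then invoke isometry to see that these central elements are projections, and finally conclude that $w$ normalizes $M$. The one substantive difference lies in how isometry is exploited in that last step. The paper takes the spectral projection $p_\vp$ of $\theta(v)\theta(v)^*$ for $[\vp,1-\vp]$, observes $\|p_\vp\theta(v)\|<1$, pulls back through $\theta^{-1}$ to get $\|\theta^{-1}(p_\vp)v\|<1$, and then notes that $\|\theta^{-1}(p_\vp)v\|^2=\|\theta^{-1}(p_\vp)vv^*\|$ is the norm of a projection and hence must be $0$; this forces $\mathrm{sp}(\theta(v)\theta(v)^*)\subseteq\{0,1\}$ without ever identifying the projection. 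Your route instead derives the exact norm identity $\|(w^*w)\sigma(m^*m)\|=\|\sigma(p)\sigma(m^*m)\|$ for all $m$ and reads off $w^*w=\sigma(p)$ via a level-set comparison in $\mathcal{Z}(M)\cong L^\infty(\Omega)$, which works cleanly because $\sigma(p)$ is already a projection. Your approach thus yields the slightly sharper conclusion $w^*w=\sigma(p)$, $ww^*=\sigma(q)$, whereas the paper only extracts that these are \emph{some} central projections; either is enough for the lemma.
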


\begin{proof}
Consider a  fixed but arbitrary $v\in X\cap\gncp$. To see that $\theta$ maps $X_0$ into $X_0$, it suffices to show that $\theta(v)\in X\cap\gncp$. Define $\beta:M\to M$ by $\beta(x)=vxv^*$ for $x\in M$. Let $z_1=v^*v$ and $z_2=vv^*$ be central projections in $M$ and observe that $\beta$ is a $*$-homomorphism that restricts to a $*$-isomorphism of $Mz_1$ onto $Mz_2$. Moreover,
\begin{equation}
\beta(x)v=vxv^*v=vxz_1=vz_1x=vx,\ \ x\in M,
\end{equation}
and an application of $\theta$ gives
\begin{equation}
\theta(\beta(x))\theta(v)=\theta(v)\theta(x),\ \ \ x\in M.
\end{equation}
Replacing $x$ by $\theta^{-1}(x)$ leads to 
\begin{equation}
\theta(v)x=\theta(\beta(\theta^{-1}(x)))\theta(v),\ \ \ x\in M,
\end{equation}
from which it follows that
\begin{equation}
\theta(v)^*\theta(v)\in M'\cap \mxg=\Z(M)
\end{equation}
and that
\begin{equation}
\theta(v)\theta(v)^*\in \theta(\beta(\theta^{-1}(x)))'\cap(\theta(z_2)(\mxg))=\Z(M)\theta(z_2)\subseteq \Z(M).
\end{equation}
For $\vp\in (0,1/2)$, let $p_\vp$ be the spectral projection of $\theta(v)\theta(v)^*$ for the interval $[\vp,1-\vp]$. Then $\|p_\vp\theta(v)\|<1$ so $\|\theta^{-1}(p_\vp)v\|<1$. Also $\|\theta^{-1}(p_\vp)v\|^2=\|\theta^{-1}(p_\vp)vv^*\|$ and this last term is a central projection which is consequently 0, otherwise its norm would be 1. Thus $p_\vp \theta(v)\theta(v)^*=0$, and it follows that $\theta(v)\theta(v)^*$ is a projection by letting $\vp\to 0$. Then $\theta(v)^*\theta(v)$ is also a projection, and it has already been shown that these elements lie in $\Z(M)$. Since
\begin{equation}
\theta(v)\theta(x)\theta(v)^*=\theta(vx)\theta(v)^*=\theta(vxv^*v)\theta(v)^*=\theta(vxv^*)\theta(v)\theta(v)^*,\ \ \ x\in M,
\end{equation}
we see that $\theta(v)M\theta(v)^*\subseteq M$, and a similar calculation shows that $\theta(v)^*M\theta(v)\subseteq M$. Thus 
$\theta(v)\in \gncp$, and it follows that $\theta(X_0)\subseteq X_0$. The same argument applied to $\theta^{-1}$ gives $\theta^{-1}(X_0)\subseteq X_0$, proving that $\theta(X_0)=X_0$.
\end{proof}

Recall that a C$^*$-algebra $A$ is said to be the C$^*$-{\emph{envelope}}
of a unital operator space $X$ if there is a completely isometric unital embedding $\iota':X\to A$ so that $\iota'(X)$ generates $A$, and if $B$ is another C$^*$-algebra with a completely
isometric unital embedding $\iota:X\to B$ whose range generates $B$, then there is a $*$-homomorphism $\pi:B\to A$ so that $\pi\circ\iota=\iota'$ (which entails surjectivity of $\pi$).
Every unital operator space has a unique C$^*$-envelope denoted C$^*_{env}(X)$, \cite{Ar1,Ar2,Ha}.

\begin{lem}\label{env}
Let a discrete group $G$ act on a von Neumann algebra $M$ by properly outer $*$-automorphisms and let $X$ be a $w^*$-closed $M$-bimodule satisfying $M\subseteq X\subseteq \mxg$. Let $X_0$ be defined as in \eqref{X0def}. Then C$^*(X_0)$ is the C$^*$-envelope of $X_0$.
\end{lem}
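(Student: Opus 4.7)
The plan is to use the standard characterization of the C$^*$-envelope via the Shilov boundary ideal: $C^*(X_0)$ is the C$^*$-envelope of $X_0$ if and only if the only closed two-sided ideal $J$ in $C^*(X_0)$ for which the quotient map $q:C^*(X_0)\to C^*(X_0)/J$ is completely isometric on $X_0$ is $J=\{0\}$. Thus it will be enough to prove that any nonzero such ideal must fail even to be injective on $X_0$.

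First, I would verify that Lemma \ref{ideal} applies to the C$^*$-subalgebra $A:=C^*(X_0)$ of $\mxg$. The hypothesis $M'\cap(\mxg)=\Z(M)$ is the standard consequence of proper outerness of the action, the conditional expectation $E_M$ is faithful and normal, and from the definition \eqref{X0def} the space $X_0$ lies in the norm-closed $M$-linear span of $X\cap\gncp$. Consequently $C^*(X_0)$ is generated as a C$^*$-algebra by $M$ together with the subset $X\cap\gncp\subseteq\gnmn$, exactly the setting of Lemma \ref{ideal}.

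Next I would observe that $M\subseteq X_0$. The identity $1\in M\subseteq X$ is trivially a central groupoid normalizer of $M$, so $1\in X\cap\gncp$, and therefore $M=M\cdot 1$ is contained in the norm-closed $M$-linear span that defines $X_0$. This is the key geometric point that links the ideal-theoretic conclusion of Lemma \ref{ideal} to the operator-space structure of $X_0$.

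Finally, suppose for contradiction that $J$ is a nonzero norm-closed two-sided ideal of $C^*(X_0)$ for which $q|_{X_0}$ is completely isometric. By Lemma \ref{ideal} there is a nonzero element $m\in M\cap J$. Since $m\in M\subseteq X_0$ and $q(m)=0$ with $\|m\|>0$, the restriction $q|_{X_0}$ is not even injective, contradicting the assumption. Hence $J=\{0\}$, and $C^*(X_0)$ is the C$^*$-envelope of $X_0$. The only nontrivial work has already been done in Lemma \ref{ideal}, which was designed precisely for this application; accordingly no additional technical obstacle is expected in carrying out the argument.
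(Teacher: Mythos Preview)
Your proof is correct and follows essentially the same approach as the paper: both arguments reduce to showing that the relevant ideal (the kernel of the canonical map onto $C^*_{env}(X_0)$ in the paper, the Shilov boundary ideal in your version) is zero by invoking Lemma \ref{ideal} to produce a nonzero element of $M\cap J\subseteq X_0\cap J$, contradicting injectivity of the quotient on $X_0$. The only difference is phrasing---the paper uses the universal property of the C$^*$-envelope directly while you use the equivalent Shilov boundary characterization---but the substance is identical.
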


\begin{proof}
Let $\iota:X_0\to C^*(X_0)$ be the identity embedding and let $\iota':X_0\to C^*_{env}(X_0)$ be a completely isometric embedding such that $\iota'(X_0)$ generates $C^*_{env}(X_0)$. By the definition of the C$^*$-envelope, there is a surjective $*$-homomorphism $\pi:
C^*(X_0)\to C^*_{env}(X_0)$ such that $\pi\circ \iota=\iota'$ on $X_0$. Letting $J=\ker \pi$, it suffices to show that $J=\{0\}$.

Now C$^*(X_0)$ satisfies the hypothesis for the C$^*$-algebra $A$ in Lemma \ref{ideal}, so if $J\ne\{0\}$, then $J\cap M\ne\{0\}$. This would imply that $\iota'|_M$ has a nontrivial kernel, a contradiction. Thus $J=\{0\}$ as required. 
\end{proof}

The next lemma addresses a technical point that will occur several times in the proof of Theorem \ref{liftX0}.

\begin{lem}\label{techlem}
Let a discrete group $G$ act on a von Neumann algebra $M$ by properly outer $*$-automorphisms. Let $\{x_\lambda\}_{\lambda\in\Lambda}$ be a uniformly bounded net in $M$ converging strongly to $x\in M$. Then
\begin{equation}\label{techlem1}
B{\text{-}}\lim_\lambda x_\lambda g=xg,\ \ \ \ g\in G.
\end{equation}
\end{lem}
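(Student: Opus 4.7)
The plan is direct: test the defining seminorm of the $B$-topology against an arbitrary normal state $\omega \in M_*$ and reduce everything to strong convergence in $M$. The first step is to compute, using the commutation relation $g^{-1}m g = \alpha_{g^{-1}}(m)$ inside $\mxg$,
\begin{equation}
(x_\lambda g - xg)^*(x_\lambda g - xg) \;=\; g^{-1}(x_\lambda - x)^*(x_\lambda - x)g \;=\; \alpha_{g^{-1}}\!\bigl((x_\lambda - x)^*(x_\lambda - x)\bigr).
\end{equation}
This element lies in $M$, so $E_M$ acts on it as the identity, and the square of the $\omega$-seminorm of $x_\lambda g - xg$ equals $(\omega\circ\alpha_{g^{-1}})\bigl((x_\lambda - x)^*(x_\lambda - x)\bigr)$.

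Next I would observe that $\alpha_{g^{-1}}$ is a normal $*$-automorphism of $M$, so $\omega\circ\alpha_{g^{-1}}$ is again a normal state on $M$. The uniform boundedness of $\{x_\lambda\}$ and strong convergence $x_\lambda \to x$ imply that the positive, uniformly bounded net $(x_\lambda - x)^*(x_\lambda - x)$ tends to $0$ strongly (since $(x_\lambda-x)^*$ is uniformly bounded and $(x_\lambda - x)\xi \to 0$ for every vector $\xi$). On bounded subsets of a von Neumann algebra the strong and $\sigma$-strong topologies agree, and the latter is finer than the $\sigma$-weak topology, so every normal functional vanishes in the limit. Consequently
\begin{equation}
\lim_\lambda (\omega\circ\alpha_{g^{-1}})\!\bigl((x_\lambda - x)^*(x_\lambda - x)\bigr) = 0.
\end{equation}

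Since $\omega$ was arbitrary, this says exactly that $x_\lambda g \to xg$ in the $B$-topology, which is the claim. There is no real obstacle in this proof; the only point to be careful about is that boundedness lets one pass from strong convergence in $M$ to convergence against every normal functional, which is what the $B$-topology definition requires.
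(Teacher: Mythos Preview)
Your proof is correct and follows essentially the same route as the paper: both compute $E_M\bigl((x_\lambda g - xg)^*(x_\lambda g - xg)\bigr) = \alpha_{g^{-1}}\bigl((x_\lambda - x)^*(x_\lambda - x)\bigr)$ and then show this vanishes against every normal state. The only cosmetic difference is that the paper tests first against vector states and then appeals to their norm density in $M_*$, whereas you argue directly that a bounded strongly null net is $\sigma$-weakly null and hence kills every normal functional; both conclusions are immediate.
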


\begin{proof}
Assume that $M$ is faithfully represented on a Hilbert space $\Hil$ and let $\xi\in\Hil$ be a fixed unit vector with associated vector state $\omega_\xi(\cdot)=\langle \cdot\,\xi,\xi\rangle$. Then
\begin{equation}\label{techlem2}
\lim_\lambda \omega_\xi\circ E_M((xg-x_\lambda g)^*(xg-x_\lambda g))=\lim_\lambda\langle
\alpha_{g^{-1}}(x-x_\lambda)^*\alpha_{g^{-1}}(x-x_\lambda)\xi,\xi\rangle=0
\end{equation}
since $*$-automorphisms are strongly continuous on uniformly bounded sets. The convex hull of the vector states is norm dense in the set of normal states of $M$, so \eqref{techlem1} follows from \eqref{techlem2}.
\end{proof}

We come now to the main result of this section, the extension of certain maps on bimodules to $*$-automorphisms of the generated von Neumann algebras.

\begin{thm}\label{liftX0}
Let a discrete group $G$ act on a von Neumann algebra $M$ by properly outer $*$-automorphisms and let $X$ be a $w^*$-closed $M$-bimodule satisfying $M\subseteq X\subseteq \mxg$. Let $\theta:X\to X$ be a $w^*$-continuous surjective isometric $M$-bimodule map. Then there exists a $*$-automorphism $\phi$ of $W^*(X)$ such that $\phi |_{X}=\theta$.
\end{thm}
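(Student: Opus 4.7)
The plan is to extend $\theta$ through three successive layers: first descend from $X$ to the smaller operator space $X_0$, then lift to a $*$-automorphism of $C^*(X_0)$, and finally push up to $W^*(X)=W^*(X_0)$ by exploiting the $B$-topology.

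First I would invoke Lemmas \ref{autocb} and \ref{thetaX0}: $\theta$ is a complete isometry whose restriction to $X_0$ is a unital bijection onto $X_0$. Since $C^*(X_0)$ is the $C^*$-envelope of $X_0$ by Lemma \ref{env}, the universal property of the $C^*$-envelope supplies a unique $*$-automorphism $\sigma$ of $C^*(X_0)$ extending $\theta|_{X_0}$, and $\sigma(M)=M$ because $\theta(M)=M$. The algebra $C^*(X_0)$ is the norm-closed span of $C^*(X_0)\cap\gncp$: the set $\gncp$ is closed under products and adjoints, $M$ is norm-spanned by its unitaries, and the product of a unitary in $M$ with an element of $\gncp$ again lies in $\gncp$. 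Hence the hypotheses of Corollary \ref{autocont} are met and $\sigma$ is $B$-continuous.

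The main obstacle, as usual for such extension problems, is the passage from the $C^*$-level to the von Neumann algebra level, which is not automatic. My approach uses the $B$-topology. By Lemma \ref{generate}, $C^*(X_0)$ is $w^*$-dense in $W^*(X)$. Given $y\in W^*(X)$, Kaplansky density produces a uniformly bounded net $\{y_\lambda\}\subseteq C^*(X_0)$ with $y_\lambda\to y$ in the $\sigma$-strong$^*$ topology; by Lemma \ref{Bconv} both $\{y_\lambda\}$ and $\{y_\lambda^*\}$ are $B$-Cauchy. The identity
\begin{equation*}
\omega(E_M(\sigma(z)^*\sigma(z))) = (\omega\circ\theta)(E_M(z^*z)),\qquad z\in C^*(X_0),\ \omega\in M_*,
\end{equation*}
extracted from Corollary \ref{autocont}, shows by linearity that $\sigma$ carries $B$-Cauchy nets to $B$-Cauchy nets. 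Consequently $\{\sigma(y_\lambda)\}$ and $\{\sigma(y_\lambda)^*\}$ are bounded $B$-Cauchy, so by Lemma \ref{Bconv} $\sigma$-strongly Cauchy in $\mxg$, converging to limits that lie inside the $\sigma$-strongly closed subalgebra $W^*(X)$. I would set $\phi(y)$ to be this limit. Well-definedness is immediate by combining two approximating nets into a single one, the $*$-algebra axioms for $\phi$ follow from joint $\sigma$-strong$^*$-continuity of multiplication and involution on bounded sets, and bijectivity follows by running the same construction for $\theta^{-1}$ and comparing the composition with the identity on the norm-dense subalgebra $C^*(X_0)$. The construction also ensures that $\phi$ is $\sigma$-strongly continuous on bounded subsets, hence normal.

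Finally, to verify $\phi|_X=\theta$, Theorem \ref{genmod}(i) presents $X$ as the $B$-closed span of elements $Mz_gg$, each of which lies in $X_0$, so $X\subseteq \overline{X_0}^{w^*}$. Given $y\in X$, pick a bounded $\sigma$-strong$^*$-approximating net $y_\lambda\in X_0$. Both maps send $y_\lambda$ to $\sigma(y_\lambda)=\theta(y_\lambda)$, and the $w^*$-continuity of $\theta$ on $X$ (from the hypothesis) together with the normality of $\phi$ then force $\phi(y)=\theta(y)$, completing the plan.
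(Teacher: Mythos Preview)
Your first two stages---descending to $X_0$, lifting to a $*$-automorphism of $C^*(X_0)$ via the $C^*$-envelope, and then extending to $W^*(X_0)$ by $B$-topology arguments---follow the paper's route closely. The paper phrases the extension slightly differently (it passes to a $w^*$-convergent subnet of $\{\ovl\theta(x_\lambda)\}$ rather than asserting $B$-completeness of bounded balls), but the underlying mechanism is the same: $B$-continuity of $\ovl\theta$ forces any two approximating nets to yield the same $w^*$-limit.

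The genuine gap is in your final paragraph. You invoke Theorem~\ref{genmod}(i) to conclude that $X\subseteq\overline{X_0}^{\,w^*}$, but that theorem characterizes \emph{$B$-closed} bimodules, whereas $X$ is only assumed $w^*$-closed. What Lemma~\ref{Mzg} and the $B$-convergence of Fourier series actually give is $X\subseteq\overline{X_0}^{\,B}$, and since the $B$-topology is weaker than the $\sigma$-strong topology, $\overline{X_0}^{\,B}$ can strictly contain $\overline{X_0}^{\,w^*}$. The paper makes this explicit: ``it may be that $X_0$ is not $w^*$-dense in $X$. Thus the final step \ldots\ requires further argument.'' Nor can you patch this by working in the $B$-topology directly: the Fourier partial sums of $y\in X$ need not be uniformly bounded, so Lemma~\ref{Bconv} does not convert the $B$-approximation into a $w^*$-approximation; and $\theta$ is only assumed $w^*$-continuous on $X$, not $B$-continuous.

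The paper's resolution is substantially more intricate. After constructing $\phi$, it introduces a second Fourier-type expansion $x=\sum_g \phi(y_g)w_g$ with $w_g=\phi(z_gg)$, obtained by applying the $B$-continuous automorphism $\phi$ to the ordinary Fourier series of $\sigma(x)$. Assuming $\phi(x)\neq\theta(x)$ for some $x\in X$, one isolates a discrepancy at a single coordinate $h$, reduces (by subtracting the $h$-term, multiplying, and using the partial isometries of \cite[p.~61]{SS2}) to the case where the $h$-coefficient of $x$ vanishes while the $h$-term of $\theta(x)$ is $pw_h\ne 0$ for a central projection $p$. Then the Christensen--Sinclair averaging operators $\beta=(m_j)$ from Remark~\ref{outer} are applied: they kill all $g\ne h$ coefficients of $x^\beta$ in the $w^*$-limit (by proper outerness of $\alpha_{gh^{-1}}$), forcing $x^\beta\to 0$ and hence $\theta(x^\beta)\to 0$ by $w^*$-continuity of $\theta$, while the $h$-term $pw_h$ of $\theta(x^\beta)$ survives unchanged---a contradiction. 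This argument genuinely uses the $w^*$-continuity hypothesis on $\theta$ together with the averaging technique, and there is no evident shortcut.
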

\begin{proof}

Let $X_0$ be defined as in \eqref{X0def}.
By Lemma \ref{autocb}, $\theta$ is a complete isometry on $X$ and maps $X_0$ onto itself using Lemma \ref{thetaX0}. By definition of the 
C$^*$-envelope, which is C$^*(X_0)$ from Lemma \ref{env}, the restriction of $\theta$ to $X_0$ extends to a $*$-automorphism $\ovl{\theta}$ 
of C$^*(X_0)$. We now extend $\ovl{\theta}$ to a  map $\phi$ on $W^*(X_0)$ which we will then prove is a $*$-automorphism of this von Neumann algebra.

Consider $x\in W^*(X_0)$ and let $K$ be the norm closed ball in C$^*(X_0)$ of radius $\|x\|$. By the Kaplansky density theorem, $x\in \ovl{K}^{w^*}$. By \cite[Remark
3.3(i)]{CaSm}, the $w^*$- and $B$-closures of $K$ coincide, so there exists a net $\{x_\lambda\}_{\lambda\in \Lambda}$ from $K$  with limit $x$ in the $B$-topology. By Lemma
\ref{Bconv}, we also have $w^*	\text{-}\lim_\lambda x_\lambda =x$. By dropping to a subnet, we may further assume that 
$w^*\text{-}\lim_\lambda \ovl{\theta}(x_\lambda)$ exists in
$W^*(X_0)$, and so we have found a uniformly bounded net $\{x_\lambda\}_{\lambda\in\Lambda}\in C^*(X_0)$ such that
\begin{equation}
B	\text{-}\lim_\lambda x_\lambda =w^*	\text{-}\lim_\lambda x_\lambda =x
\end{equation}
while $w^*	\text{-}\lim_\lambda\ovl{\theta}(x_\lambda)$ exists. If $\{x_{\lambda}'\}_{\lambda\in \Lambda}$ is another such net with these properties, then
$B	\text{-}\lim_{\lambda}(x_\lambda-x_\lambda')=0$ so $B	\text{-}\lim_\lambda(\ovl{\theta}(x_\lambda)-\ovl{\theta}(x_\lambda'))=0$ from Corollary \ref{autocont} which applies since C$^*(X_0)$ satisfies the hypothesis. By Lemma \ref{Bconv}, we
also have convergence to 0 in the $w^*$-topology, so there is a well-defined linear contraction $\phi:W^*(X_0)\to W^*(X_0)$ defined by
\begin{equation}
\phi(x)=w^*	\text{-}\lim_{\lambda}(\ovl{\theta}(x_\lambda)),\ \ \ x\in W^*(X_0).
\end{equation}
Our next objective is to obtain \eqref{phiform} which gives a formula for the $g$-coefficients of $\phi(x)$.

Since $W^*(X_0)$ is a von Neumann algebra lying between $M$ and $\mxg$, Theorem \ref{Struct} gives a set of central projections $\{z_g:g\in G\}$ so that
\begin{equation}
W^*(X_0)=\ovl{\mathrm{span}}^{w^*}\{Mz_gg:g\in G\}.
\end{equation}
Let $Y$ be defined as in \eqref{Ydef} and, for each $g\in G$, let $K_g\subseteq M$ be the algebraic ideal given by
\begin{equation}
K_g=\{m\in M:mg\in \mathrm{Alg}(Y,Y^*)\}\subseteq Mz_g.
\end{equation}
Suppose that there exists $g_0\in G$ so that $\ovl{K_{g_0}}^{w^*}\ne Mz_{g_0}$. Since $\ovl{K_{g_0}}^{w^*}=Mp_{g_0}$ for some central projection $p_{g_0}\in \Z(M)$, we see
that $(z_{g_0}-p_{g_0})K_{g_0}=\{0\}$. From the definition of $Y$, each element in $\mathrm{Alg}(Y,Y^*)$ has a Fourier series of finite length and each element appearing
in one of these Fourier series itself lies in $\mathrm{Alg}(Y,Y^*)$. Consequently\begin{equation}
\mathrm{Alg}(Y,Y^*)\subseteq\sum_{g\in G}K_gg
\end{equation}
and so, for each $x\in \mathrm{Alg}(Y,Y^*)$,
\begin{equation}
E_M(x\alpha_{g_0^{-1}}(z_{g_0}-p_{g_0})g_0^{-1})=0.
\end{equation}
Since $\mathrm{Alg}(Y,Y^*)$ is $w^*$-dense in $W^*(X_0)$ from Lemma \ref{generate}, we see that
\begin{equation}
M(z_{g_0}-p_{g_0})=E_M(Mz_{g_0}g_0\alpha_{g_0^{-1}}(z_{g_0}-p_{g_0})g_0^{-1})=\{0\},
\end{equation}
implying that $p_{g_0}=z_{g_0}$, a contradiction. Thus each $K_g$ is $w^*$-dense in $Mz_g$.

Fix $g\in G$. The ideal $\ovl{K_g}^{\|\cdot\|}$ is also $w^*$-dense in $Mz_g$, so has an increasing approximate identity $\{e_\gamma\}_{\gamma\in \Gamma}$ converging
strongly to $z_g$. For $m\in M$, we have $me_\gamma,e_\gamma g\in C^*(X_0)$, so multiplicativity of $\ovl{\theta}$ on C$^*(X_0)$ gives
\begin{equation}\label{eqA}
\theta(me_\gamma)\ovl{\theta}(e_\delta g)=\ovl{\theta}(me_\gamma e_\delta g),\ \ \ \ m\in M, \ \ \gamma, \delta \in \Gamma.
\end{equation}
Now, by Lemma \ref{techlem},  $B\text{-}\lim_\delta e_\delta g=z_gg$ and, by dropping to a subnet, we may assume that $w^*	\text{-}\lim_\delta
\ovl{\theta}(e_\delta g)=\phi(z_gg)$. Then, for each $\gamma\in \Gamma$, Lemma \ref{techlem} and \eqref{eqA} respectively imply that
\begin{equation}\label{eqB}
B\text{-}\lim_\delta me_\gamma e_\delta g= me_\gamma g\ \ \ \mathrm{and}\ \ \ w^*	\text{-}\lim_\delta \ovl{\theta}(me_\gamma e_\delta g)=\theta(me_\gamma)\phi(z_gg).
\end{equation}
Thus
\begin{equation}\label{eqC}
\ovl{\theta}(me_\gamma g)=\theta(me_\gamma)\phi (z_gg),\ \ \ m\in M,\ \gamma\in \Gamma,
\end{equation}
using the $B$-continuity of $\ovl{\theta}$ on C$^*(X_0)$ (Lemma \ref{autocont}).
Then Lemma \ref{techlem} and \eqref{eqC} give respectively
\begin{equation}\label{eqD}
B	\text{-}\lim_\gamma me_\gamma g=mz_gg\ \ \ \mathrm{and}
\ \ \ w^*	\text{-}\lim_\gamma \ovl{\theta}(me_\gamma g)=\theta(mz_g)\phi(z_gg)
\end{equation}
so, by \eqref{eqD} and the definition of $\phi$, 
\begin{equation}\label{formula}
\phi(mz_gg)=\theta(mz_g)\phi(z_gg)=\theta(m)\theta(z_g)\phi(z_gg),\ \ \ m\in M.
\end{equation}
Putting $m=1$ gives $\phi(z_gg)=\theta(z_g)\phi(z_gg)$, so \eqref{formula} becomes
\begin{equation}
\phi(mz_gg)=\theta(m)\phi(z_gg),\ \ \ m\in M.
\end{equation}
In exactly the same way, we obtain a contractive extension $\sigma: W^*(X_0)\to W^*(X_0)$ of $\theta^{-1}|_{X_0}$ and
\begin{equation}
\sigma(mz_gg)=\theta^{-1}(m)\sigma(z_gg),
\ \ \ m\in M.
\end{equation}

Fix $x\in W^*(X_0)$ and a uniformly bounded net $\{x_\lambda\}_{\lambda\in\Lambda}\in C^*(X_0)$ so that
\begin{equation}
B	\text{-}\lim_\lambda x_\lambda =x\ \mathrm{and}\
w^*	\text{-}\lim_\lambda \ovl{\theta}(x_\lambda)=\phi(x).
\end{equation}
Fix $g\in G$ and let $\{f_\omega\}_{\omega\in \Omega}$ be an increasing approximate identity for ${\ovl{K_{g^{-1}}}}^{\|\cdot\|}$, whereupon $f_\omega \to z_{g^{-1}}$ strongly. If $\phi(x)_g$
denotes the $g$-coefficient in the Fourier series of $\phi(x)$, then
\begin{align}
\phi(x)_g\alpha_g(f_\omega)&= E_M(\phi(x)g^{-1}\alpha_g(f_\omega))=E_M(\phi(x)f_\omega g^{-1})\notag\\
&=w^*	\text{-}\lim_\lambda E_M(\ovl{\theta}(x_\lambda)f_\omega g^{-1})
=w^*	\text{-}\lim_\lambda E_M(\ovl{\theta}(x_\lambda \ovl{\theta}^{-1}(f_\omega g^{-1})))\notag\\
&=w^*	\text{-}\lim_\lambda {\theta}(E_M(x_\lambda \ovl{\theta}^{-1}(f_\omega g^{-1})))
=\theta(E_M(x\theta^{-1}(f_\omega)\sigma(z_{g^{-1}}g^{-1}))),
\end{align}
where we have applied Lemma \ref{Conlem4} (iii) to C$^*(X_0)$ for the fifth equality, and used the relation 
$\ovl{\theta}^{-1}(f_\omega g^{-1})=\sigma(f_\omega z_{g^{-1}}g^{-1})=\ovl{\theta}^{-1}(f_\omega)\sigma(z_{g^{-1}}g^{-1})$ for the last equality.
Now take the limit over $\omega$ to obtain
\begin{align}
\phi(x)_g&= \phi(x)_g z_g=\phi(x)_g\alpha_g(z_{g^{-1}})=
w^*	\text{-}\lim_\omega \phi(x)_g\alpha_g(f_\omega)\notag\\
&= w^*	\text{-}\lim_\omega \theta(E_M(x\theta^{-1}(f_\omega)\sigma(z_{g^{-1}}g^{-1})))
=\theta (E_M(x\theta^{-1}(z_{g^{-1}})\sigma(z_{g^{-1}}g^{-1})))\notag\\
&=\theta(E_M(x\sigma(z_{g^{-1}}g^{-1}))),\ \ \ \ x\in W^*(X_0),\ \ \ \ g\in G, \label{phiform}
\end{align}
where we have employed \eqref{c2} to replace $z_g$ by $\alpha_g(z_{g^{-1}})$ in the second equality.

We now use this formula to check the $w^*$-continuity of $\phi$. By the Krein--Smulian theorem, it suffices to consider a uniformly bounded net $\{x_\mu\}$ from
$W^*(X_0)$ such that
\begin{equation}
w^*\text{-}\lim_\mu x_\mu =0\ \ \ \mathrm{and}\ \ \
w^* 	\text{-}\lim_\mu \phi(x_\mu)=y\in W^*(X_0)
\end{equation}
and to conclude that $y=0$. For each $g\in G$, \eqref{phiform} gives
\begin{align}
E_M(yg^{-1})&=w^*		\text{-}\lim_\mu E_M(\phi(x_\mu)g^{-1})=w^*	\text{-}\lim_\mu \phi(x_\mu)_g\notag\\
&=w^*	\text{-}\lim_\mu \theta(E_M(x_\mu \sigma(z_{g^{-1}}g^{-1})))=0,
\end{align}
since $w^*	\text{-}\lim_\mu x_\mu =0$. Thus $y=0$, and $w^*$-continuity of $\phi$ is established. In the same way $\sigma$ is $w^*$-continuous, and $\phi\circ
\sigma=\sigma\circ\phi={\mathrm{id}}$ on $W^*(X_0)$ since these relations hold on C$^*(X_0)$. It is then clear that $\phi$ is a $*$-automorphism of $W^*(X_0)=W^*(X)$, where this equality is Lemma \ref{generate}. At this point we have only established that $\phi$ and $\theta$ agree on $X_0$, and it may be that $X_0$ is not $w^*$-dense in $X$. Thus the final step of proving equality of these two maps on $X$ requires further argument.

From Corollary \ref{NBcont}, both $\phi$ and $\sigma$ are $B$-continuous on $W^*(X)$, so if we define $w_g=\phi(z_gg)$ and apply $\phi$ to the Fourier series $\sigma(x)=\sum_{g\in G}y_gz_gg$ for an element $x\in W^*(X)$, then we obtain a $B$-convergent series $x=\sum_{g\in G}\phi(y_g)w_g$. This gives a second type of Fourier expansion for elements of $W^*(X)$.

Suppose that $x=\sum_{g\in G}x_gz_gg\in X$ is such that $\phi(x)\ne \theta(x)$. Then $\phi(x)=\sum_{g\in G} \phi(x_g)w_g$, and let $\sum_{g\in G} y_gw_g$ be the corresponding expansion for $\theta(x)\in X$. Fix $h\in G$ so that $\phi(x_h)w_h\ne y_hw_h$. Since $\phi$ and $\theta$ agree on $X_0$, we may replace $x$ by $x-x_hz_hh$, whereupon the $h$-term in the expansion of $\theta(x-x_hz_hh)=\theta(x)-\phi(x_h)w_h$ is $y_hw_h-\phi(x_h)w_h\ne 0$. Thus we may assume that $x_hz_h=0$ while $y_hw_h\ne 0$. Multiplication on the left by a suitable element of $M$ allows us to assume that $y_h$ is a projection $p$ with $pw_h\ne 0$, and the $h$-coefficient of $x$ is still 0. The final reduction is to replace $p$ by a central projection.

From \cite[p. 61]{SS2} we may choose partial isometries $v_i$ such that $\sum_i v_ipv_i^*$ is the central support projection of $p$. Now define $\tilde{x}=\sum_i\sigma(v_i)x\alpha_{h^{-1}}(\sigma(v^*_i))$. The $w^*$-continuity of $\theta$ gives $\theta(\tilde{x})=\sum_iv_i\theta(x)\phi(\alpha_{h^{-1}}(\sigma(v_i^*)))$ and the $h$-term in the expansion of $\theta(\tilde{x})$ is
\begin{align}
\sum_i v_iy_h\phi(z_hh)\phi(\alpha_{h^{-1}}(\sigma(v_i^*)))
&=\sum_iv_ip\phi(z_hh\alpha_{h^{-1}}(\sigma(v_i^*)))\notag\\
&=\sum_i v_ip\phi(\sigma(v_i^*)z_hh)
=\sum_iv_ipv_i^*w_h.
\end{align}
Noting that the $h$-coefficient of $\tilde{x}$ is 0, we may replace $x$ by $\tilde{x}$ to arrive at the following situation: $x_h=0$ while the $h$-term $y_hw_h$ in the expansion of $\theta(x)$ is $pw_h\ne 0$ where $p$ is a central projection in $M$.

Let $\beta=(m_j)$ be the net of collections of operators from Lemma \ref{Mzg} and define
\begin{equation}
x^{\beta}=\sum_j m_j^*x\alpha_{h^{-1}}(m_j).
\end{equation}
The $h$-coefficient of $x^\beta$ is 0, while the $g$-coefficient for $g\ne h$ is $\sum_jm_j^*x_g\alpha_{gh^{-1}}(m_j)$. For each of these sums, the $w^*$-limit over $\beta$ is 0 by Remark \ref{outer} since $\alpha_{gh^{-1}}$ is properly outer. 
This is a uniformly bounded net in $X$ so by dropping to a subnet, we may assume that it has a $w^*$-limit which must therefore be 0. Thus $w^*\text{-}\lim_\beta \theta(x^\beta)=0$.

Since
\begin{equation}
\theta(x^\beta)=\sum_j\phi(m_j^*)\sum_{g\in G}y_gw_g\phi(\alpha_{h^{-1}}(m_j)),
\end{equation}
the $h$-term in the expansion of $\theta(x^\beta)$ is
\begin{align}
\sum_j\phi(m_j^*)pw_h\phi(\alpha_{h^{-1}}(m_j))
&=\sum_j\phi(m_j^*\sigma(p)z_hh\alpha_{h^{-1}}(m_j))\notag\\
&=\phi(\sum_jm_j^*m_j\sigma(p)z_hh)=pw_h\ne 0,
\end{align}
using the $w^*$-continuity of $\phi$
and the centrality of $\sigma(p)z_h$.
 From this, we reach the contradiction that $w^*\text{-}
\lim_\beta \theta(x^\beta)\ne 0$. Thus $\phi(x)=\theta(x)$ for all $x\in X$, completing the proof.
\end{proof}

\begin{rem}
(i) \quad There is a more general version of Theorem \ref{liftX0} that is easily deduced from it. Let discrete groups $G_1$ and $G_2$ act on a von Neumann algebra $M$ by properly
outer $*$-automorphisms $\{\alpha_{g_1}:g_1\in G_1\}$ and $\{\beta_{g_2}:g_2\in G_2\}$ respectively. For $i=1,2$ let $X_i$ be $w^*$-closed $M$-bimodules satisfying
$M\subseteq X_1\subseteq M\rtimes_\alpha G_1$ and $M\subseteq X_2\subseteq M\rtimes_\beta G_2$. Then a $w^*$-continuous surjective isometric $M$-bimodule map
$\psi:X_1\to X_2$ extends to a $*$-isomorphism $\phi:W^*(X_1)\to W^*(X_2)$.
This is obtained by letting $G:=G_1\times G_2$ act on $M\oplus M$ by the product action $\gamma:=\alpha\oplus \beta$ and applying Theorem \ref{liftX0} to
$X:=X_1\oplus X_2$ and the map $\theta(x_1\oplus x_2)=\psi^{-1}(x_2)\oplus \psi(x_1)$. The resulting $*$-automorphism of $W^*(X_1)\oplus W^*(X_2)$, when restricted to
$W^*(X_1)$, gives the desired $*$-isomorphism $\phi$.

\noindent (ii) \quad If the $M$-bimodule $X$ happens to be a subalgebra $A\subseteq M$ (possibly non-self-adjoint), then it follows from Theorem \ref{liftX0} that the $M$-bimodule maps of this theorem are automatically algebraic isomorphisms of $A$. \hfill$\square$
\end{rem}

\end{document}